\numberwithin{equation}{section}
\newtheorem{thm}{Theorem:}[section]
\newtheorem{lem}{Lemma:}[section]
\newtheorem{Def}{Definition:}[section]
\begin{document}
	\title{On the Oscillation of Three Dimensional Katugampola Fractional Delay Differential Systems}
	{\author{A. Kilicman\footnote{E.Mail: akilicman@yahoo.com (A. Kilicman )}\\ Department of Mathematics,\\ Universiti Putra Malaysia, 43400, UPM Serdang, Selangor, Malaysia \vspace{0.3in} \\  V. Sadhasivam\footnote{E.Mail: ovsadha@gmail.com (V. Sadhasivam)}, M. Deepa\footnote{E.Mail: mdeepa.maths@gmail.com (M. Deepa)} \ and \ N. Nagajothi\footnote{E.Mail: nagajothi006@gmail.com (N. Nagajothi)}  \\ Post Graduate and Research Department of Mathematics,\\ Thiruvalluvar Government Arts College (Affli. to Periyar University),\\ Rasipuram - 637 401, Namakkal Dt.\\
			Tamil Nadu, India}}
	\date{}
	\maketitle{}
	
\begin{abstract} 
	In this study, we consider the three dimensional $\alpha$-fractional nonlinear delay differential system of the form 
	\begin{eqnarray*}
		D^{\alpha}\left(u(t)\right)&=&p(t)g\left(v(\sigma(t))\right),\\ D^{\alpha}\left(v(t)\right)&=&-q(t)h\left(w(t))\right),\\ D^{\alpha}\left(w(t)\right)&=& r(t)f\left(u(\tau(t))\right),~ t \geq t_0,
	\end{eqnarray*}
	where $0 < \alpha \leq 1$, $D^{\alpha}$ denotes the Katugampola fractional derivative of order $\alpha$. We have established some new oscillation criteria of solutions of differential system by using generalized Riccati transformation and inequality technique. The obtained results are illustrated with suitable examples.
	\\ [1ex]
	\noindent{\bf 2010 Mathematics Subject Classification:} 34A08, 34A34, 34K11.\\
	{\bf Key words:} Oscillation, Nonlinear differential system, Delay differential system, $\alpha$- fractional derivative.
\end{abstract}

\section{Introduction}
Over the years, there are many developments on ordinary differential equations and partial differential equations involving global fractional derivatives of Caputo, Riemann-Liouville or Hadamard type, we refer the reader to the monographs [1, 8, 9, 12, 18, 23, 28, 33], the references cited therein.\\

Fractional differential equations emerge in several engineering and scientific branches as the mathematical modeling of systems and processes in the field of chemical processes, electrodynamics of computer medium, polymer rheology, mathematical biology, etc. The applications of fractional calculus to biomedical problems are done in the areas of membrane biophysics and polymer viscoelasticity, where the experimentally observed power law dynamics for current-voltage and stress-strain relationships are concisely captured by fractional order differential equations.\\

At the end of the nineteenth century, the qualitative analysis of nonlinear systems of integer order differential equations was initiated by Henry Poincare. After words, there has been significant development in the study of oscillation of integer order differential systems [3, 4, 10, 13, 19, 22, 26, 30, 31].\\

For application in physics, Vreeke and Sandquist [32] proposed the systems of differential equations
\begin{eqnarray*}
	\frac{dx_1(t)}{dt}&=& x_1(\gamma_1(1-x_2)+\gamma_2(1-x_3)),\\
	\frac{dx_2(t)}{dt}&=& \gamma_3(x_1-x_2),\\
	\frac{dx_3(t)}{dt}&=& \gamma_4(x_1-x_3),
\end{eqnarray*}
to describe the two temperature feedback nuclear reactor problem, where $x_1$ is normalized neutron density, $x_2$ and $x_3$ are normalized temperatures, $x_2$ being associated with fuel and $x_3$, with the moderator or coolant, $\gamma_3$ and $\gamma_4$ are positive heat transfer coefficients, $\gamma_1$ and $\gamma_2$ are normalized effective neutron lifetime parameters associated with the temperature feedbacks. The expression $\rho=\gamma_1(1-x_2)+\gamma_2(1-x_3)$ in the first equation is called reactivity and is a measure of multiplication factor of the neutrons in the fission reactor.\\

Although, the oscillation theory of classical differential systems is well established, see [7, 11, 24, 27], the growth of oscillation theory of nonlinear fractional differential system is notably little bit slow due to the occurrence of nonlocal behavior of fractional derivatives possessing weakly singular kernels.\\

In 2014, Khalil et al. introduced the conformable fractional derivative. This conformable fractional derivative seems to be a kind local derivative without memory, see [2, 5, 16]. An interesting application of the conformable fractional derivative in physics was discussed  in [20], where it has been used to formulate an Action Principle for particles under frictional forces.\\

Conformable fractional derivatives is quickly generalized by Katugampola, which has been referred to here as the Katugampola fractional derivative. Nowadays, many authors got interested in this type of derivatives for their many nice properties [6, 14, 15].\\

In [29], Spanikova et al. investigated the oscillatory properties of three-dimensional differential systems of neutral type
\begin{eqnarray*}
	[y_1(t)-a(t)y_1(g(t))]^{'}&=& p_1(t)f_1(y_2(h_2(t))),\\
	y_2^{'}(t)&=& p_2(t)f_2(y_3(h_3(t))),\\
	y_3^{'}(t)&=& -p_3(t)f_3(y_1(h_1(t))),~t \in [0,\infty).
\end{eqnarray*}
In [25], Sadhasivam et al. studied the existence of solutions of  three-dimensional fractional differential systems of the following form
\begin{eqnarray*}
	D^{\alpha}_{0^{+}}u(t)=f_{1}(t, v(t), v^{'}(t)), ~t \in (0,1),\\
	D^{\beta}_{0^{+}}v(t)=f_{2}(t, w(t), w^{'}(t)), ~t \in (0,1),\\
	D^{\gamma}_{0^{+}}w(t)=f_{3}(t, u(t), u^{'}(t)), ~t \in (0,1),
\end{eqnarray*}
together with the Neumann boundary condition
\begin{eqnarray*}
	u^{'}(0)=u^{'}(1)=0,~v^{'}(0)=v^{'}(1)=0,~w^{'}(0)=w^{'}(1)=0,
\end{eqnarray*}
where $D^{\alpha}_{0^{+}}, D^{\beta}_{0^{+}}, D^{\gamma}_{0^{+}}$ are the standard Caputo fractional derivatives, $1<\alpha,\beta,\gamma \leq 2$.\\
\hspace{0.2in} To the best of the authors knowledge, it seems that there has been no work done on the oscillation of $\alpha$-fractional nonlinear three dimensional differential systems.
Motivated by the above observation, we propose the following system of the form 
\begin{eqnarray}\label{e1.1}
D^{\alpha}\left(u(t)\right)&=& p(t)g\left(v(\sigma(t))\right),\nonumber
\\ D^{\alpha}\left(v(t)\right)&=& -q(t)h\left(w(t))\right),\\ D^{\alpha}\left(w(t)\right)&=& r(t)f\left(u(\tau(t))\right),~ t \geq t_0,\nonumber
\end{eqnarray}
where $0 < \alpha \leq 1$, $D^{\alpha}$ denotes the $\alpha$-fractional derivative of order $\alpha$ with respect to t.\\
Throughout this paper, we assume that the following conditions:\begin{enumerate}
	\item[$(A_1)$] $p(t) \in C^{2 \alpha}([t_0,\infty), \mathbb{R}^+)$,
	$q(t) \in C^{\alpha}([t_0,\infty), \mathbb{R}^+)$,~
	$r(t) \in C([t_0,\infty), \mathbb{R}^+)$,~p(t), $q(t)$ and
	$r(t)$ are not identically zero on any interval of the form $[T_0, \infty)$, where $T_0 \geq t_0$, q(t) and r(t) are positive and decreasing;\\
	\item[$(A_2)$] $g \in C^{\alpha}(\mathbb{R},\mathbb{R}), vg(v)>0, D^{\alpha}g(v) \geq l^{'}>0$, $h \in C^{\alpha}(\mathbb{R},\mathbb{R}), wh(w)>0, D^{\alpha}h(w) \geq m^{'}>0$,~
	$f \in C(\mathbb{R},\mathbb{R}), uf(u)>0$ and $\frac{f(u)}{u} \geq k>0$ for $u \neq 0$;\\
	\item[$(A_3)$] $\sigma(t) \leq t$, $\tau(t) \leq t$ with $D^{\alpha}\sigma(t) \geq l>0$ and satifsfies $\lim_{t \to \infty}\sigma(t)=\infty$, $\lim_{t \to \infty}\tau(t)=\infty$;\\
	\item[$(A_4)$] we will consider the cases:
	\begin{align*}
	\int_{t_0}^{\infty}s^{\alpha-1}\frac{1}{b(s)}ds= \infty, \int_{t_0}^{\infty}s^{\alpha-1}\frac{1}{a(s)}ds= \infty,
	\end{align*}
	where $b(t)= \frac{1}{q(t)}, a(t)= \frac{1}{p(t)}$ and $c(t)=l^2l^{'}m^{'}r(t)$, $a(t), b(t)$ and $c(t)$ are positive real valued continuous functions with $b(t)t^{1-\alpha}<1$.\end{enumerate}

As a solution of system (1.1), we mean that it is a vector valued function $(u(t), v(t), w(t)) \in C^{\alpha}([T_1,\infty),\mathbb{R})$, with $T_1 = min \left\{\tau(t_1),\sigma(t_1)\right\}$ for some $t_1 \geq t_0$ which has the property that $$b(t)D^{\alpha}\left(a(t)D^{\alpha}u(t)\right) \in C^{\alpha}([T_1,\infty),\mathbb{R})$$ and satisfies the system (1.1)  on $[T_1,\infty)$. In the sequel, it will be always assumed that solutions of system (1.1) exist on some half line $[T_1,\infty), T_1 > t_0$. We restrict our attention only to the nontrivial solutions of system (1.1), that is, the solutions $(u(t), v(t), w(t))$ such that $\sup\left\{|u(s)|+|v(s)|+|w(s)|,~ t \leq s < \infty\right\}>0$ for any $t \geq T_1$.\\

A proper solution $(u(t), v(t), w(t))$ of the system (1.1) will be called oscillatory if all the components are oscillatory, otherwise it will be called nonoscillatory. The system (1.1) is called oscillatory if all proper solutions are oscillatory.\\
The main goal of this paper is to present some new oscillation criteria for the system (1.1) by making use of generalized Riccati transformation and inequality technique. 

Bearing these ideas in mind the article is organized as follows. In section 2, we recall some concepts relative to the $\alpha$- fractional derivative. In section 3, we presents some new conditions for the oscillatory behavior of the solutions of system (1.1). Two illustrative examples are included in the final part of the paper to demonstrate the efficiency of new theorems.

\section{Preliminaries}
Before starting our analysis of (1.1), we have to explain the meaning of the operator $D^{\alpha}$. For the sake of completeness, let us provide the essentials of fractional calculus according to the Katugampola $\alpha$-fractional derivatives and integrals which are useful throughout this paper. We begin with the following definition.

\begin{Def}\cite{14}\label{d2.1}
	Let $y: [0, \infty)\to \mathbb{R}$ and $t>0$. Then the fractional derivative of y of order $\alpha$ is given by
	\begin{align}\label{e2.1}
	D^{\alpha}(y)(t):= \lim\limits_{\epsilon \to 0} \dfrac{y(te^{\epsilon t^{-\alpha}})-y(t)}{\epsilon} \hspace{0.2in} \mbox{for} \hspace{0.2in} t>0,
	\end{align}
	$\alpha \in (0,1]$. If y is $\alpha$-differentiable in some $(0,a)$, $a>0$, and $\lim\limits_{t \to 0^+}D^{\alpha}(y)(t)$ exists, then define
	\begin{align*}
	D^{\alpha}(y)(0):= \lim\limits_{t \to 0^+}D^{\alpha}(y)(t).
	\end{align*}
\end{Def}
{\bf $\alpha$-fractional derivative satisfies the following properties.}\cite{14}\\
Let $\alpha \in (0,1]$ and f, g be $\alpha$- differentiable at a point $t>0$ . Then\\
\begin{itemize}
	\item[$(p_1)$]  $D^{\alpha}(t^n)=nt^{n-\alpha}$ for all $n \in \mathbb{R}$.\\
	\item[$(p_2)$]  $D^{\alpha}(C)=0$ for all constant functions, $f(t)=C$.\\
	\item[$(p_3)$]  $D^{\alpha}(fg)=fD^{\alpha}(g)+gD^{\alpha}(f)$.\\
	\item[$(p_4)$]  $D^{\alpha}(\frac{f}{g})=\frac{gD^{\alpha}(f)-fD^{\alpha}(g)}{g^2}$.\\
	\item[$(p_5)$]  $D^{\alpha}(f \circ g)(t)=f^{'}(g(t))D^{\alpha}g(t)$, for $f$ is differentiable at $g(t)$.\\
	\item[$(p_6)$]  If $f$ is differentiable, then $D^{\alpha}(f)(t)=t^{1-\alpha}\frac{df}{dt}(t)$.
\end{itemize} 
\begin{Def}\cite{14}\label{d2.2}
	Let $a \geq 0$ and $t \geq a$. Also, let y be a function defined on $(a,t]$ and $\alpha \in \mathbb{R}$. Then, the $\alpha$-fractional integral of y is given by
	\begin{align}
	I^{\alpha}_a(y)(t):= \int_{a}^{t}\dfrac{y(x)}{x^{1-\alpha}}dx
	\end{align}
	if the Riemann improper integral exists.
\end{Def}
\section{Main Results}
In this section, we study oscillatory behavior of solutions of the system (1.1) under certain conditions. We next establish the following lemmas needed in our further discussion.
\begin{lem}\label{l3.1}
	If $(x(t),y(t), z(t))$ is a nonoscillatory solution of (1.1), then the component function $x(t)$ is always nonoscillatory.
\end{lem}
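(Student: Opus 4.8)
The plan is to prove the contrapositive. By the definition recalled just before the lemma, the solution $(u,v,w)$ (here written $(x,y,z)$) is \emph{nonoscillatory} exactly when at least one of its components is eventually of one sign, so it suffices to assume that the first component $x=u$ is oscillatory and show that this forces \emph{every} component to oscillate, contradicting the hypothesis. The engine of the whole argument is one monotonicity observation: by property $(p_6)$ a differentiable function $\phi$ satisfies $D^{\alpha}\phi(t)=t^{1-\alpha}\phi'(t)$, and since $t^{1-\alpha}>0$ for $t>0$ the sign of $D^{\alpha}\phi$ equals the sign of $\phi'$. Hence if $D^{\alpha}\phi$ is eventually of one strict sign, then $\phi$ is eventually strictly monotone, a strictly monotone function vanishes at most once, and therefore $\phi$ is eventually of one sign, i.e.\ nonoscillatory. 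I would isolate this as the basic fact and apply it twice.

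Next I would convert the structural hypotheses into sign information. From $(A_1)$ the coefficients $p,q,r$ are positive, and from $(A_2)$ we have $vg(v)>0$, $wh(w)>0$, $uf(u)>0$, so $g,h,f$ carry the sign of their arguments and vanish only at $0$. Reading off (1.1), this means that $D^{\alpha}u$ has the sign of $v(\sigma(t))$, that $D^{\alpha}v$ has the sign of $-w(t)$, and that $D^{\alpha}w$ has the sign of $u(\tau(t))$. Using $(A_3)$, namely $\sigma(t)\to\infty$ and $\tau(t)\to\infty$, the eventual sign of $v(\sigma(t))$ agrees with that of $v$, and likewise $u(\tau(t))$ with $u$, so delayed arguments do not disturb the eventual sign.

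Now I would carry out the chase. Assume $u$ is oscillatory; since the solution is nonoscillatory, at least one of $v,w$ is eventually of one sign. If $v$ is eventually of one sign, then $v(\sigma(t))$ is eventually of one sign, so $D^{\alpha}u=p(t)g(v(\sigma(t)))$ is eventually of one strict sign, whence the monotonicity fact makes $u$ nonoscillatory, contradicting that $u$ oscillates. If instead $w$ is eventually of one sign, then $D^{\alpha}v=-q(t)h(w(t))$ is eventually of one strict sign, so $v$ is nonoscillatory by the same fact, and we fall back into the previous case and again conclude $u$ is nonoscillatory. Either branch contradicts the assumption, so $u$ cannot be oscillatory; that is, $u$ is nonoscillatory whenever $(u,v,w)$ is.

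I expect the only delicate points to be bookkeeping rather than genuine obstruction: one must check that each relevant $\alpha$-derivative is \emph{strictly} signed (not merely weakly), so that the monotonicity step yields a true one-sign conclusion and not a constant, and one must verify that $\sigma(t),\tau(t)\to\infty$ transports the eventual sign past the delay. Both are guaranteed by the strict positivity of $p,q,r$ in $(A_1)$ together with the strict sign conditions $vg(v)>0$, $wh(w)>0$ away from zero in $(A_2)$, so the proof reduces to handling the two sign possibilities in each application symmetrically.
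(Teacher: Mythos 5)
Your argument is correct, but it is genuinely different from what the paper does: the paper offers no proof at all, only the one-line remark that the claim ``follows from Lemma 2.1 in \cite{21}'', a reference to a result of Li and Cheng for a \emph{two-dimensional integer-order} system, so the reader is left to verify that it transfers to the three-dimensional $\alpha$-fractional setting. Your sign-chasing proof supplies exactly that verification: the key observation that $D^{\alpha}\phi=t^{1-\alpha}\phi'$ makes the sign of the $\alpha$-derivative equal to the sign of the classical derivative, so ``eventually strictly signed $\alpha$-derivative $\Rightarrow$ eventually strictly monotone $\Rightarrow$ nonoscillatory'' is the right fractional analogue of the usual engine, and the two-case chase ($v$ nonoscillatory $\Rightarrow u$ nonoscillatory; $w$ nonoscillatory $\Rightarrow v$ nonoscillatory $\Rightarrow u$ nonoscillatory) correctly exhausts the possibilities under the paper's convention that a nonoscillatory solution has at least one nonoscillatory component. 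Two small points of bookkeeping you should make explicit: (i) the appeal to $(p_6)$ presupposes classical differentiability of the components, which is harmless because for $t>0$ existence of the Katugampola derivative \eqref{e2.1} forces $u'(t)=t^{\alpha-1}D^{\alpha}u(t)$ to exist, but it deserves a sentence; and (ii) the degenerate case of a component that is eventually identically zero (permitted by the nontriviality condition, which only rules out all three vanishing) should be dispatched separately, since then $g$ or $h$ of that component vanishes and the relevant $\alpha$-derivative is only weakly signed --- the conclusion still follows because the next component up the chain is then eventually constant, hence nonoscillatory unless it too vanishes. What your approach buys is a self-contained proof adapted to the actual system; what the paper's citation buys is brevity at the cost of an unverified analogy.
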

\begin{proof} 
	The proof follows from Lemma 2.1 in \cite{21}.
\end{proof}
The next lemma is the $\alpha$-fractional analogue of well known result of \cite{17}.
\begin{lem}\label{l3.2}
	Suppose that {\color{red}$(A_1)$ and $(A_4)$} holds. Then there exists a $t_1 \geq t_0$ such that either
	$(I)$  $u(t)>0, D^{\alpha}u(t)>0, D^{\alpha}(a(t)D^{\alpha}u(t))>0$ for $t\geq t_1$.\\
	or \\
	$(II)$ $u(t)>0, D^{\alpha}u(t)<0, D^{\alpha}(a(t)D^{\alpha}u(t))>0$ for $t\geq t_1$ holds.
\end{lem}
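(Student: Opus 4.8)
The plan is to transfer all the sign information into the scalar equation for the first component and then read off the three required inequalities from the system. By Lemma~\ref{l3.1} the component $u$ is nonoscillatory, hence eventually of one sign; in accordance with the statement I take $u(t)>0$ for $t\geq t_1$ (the opposite sign case being analogous). Writing $a=1/p>0$, the first equation of the system~(\ref{e1.1}) reads $a(t)D^{\alpha}u(t)=g(v(\sigma(t)))$, whence $D^{\alpha}u$, $g(v(\sigma))$ and $v(\sigma)$ all share the same sign by $(A_2)$. Thus proving the lemma reduces to showing (i) $D^{\alpha}(aD^{\alpha}u)>0$ eventually, and (ii) $D^{\alpha}u$ is eventually of one strict sign; both will follow once the sign of $w$ is fixed.

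First I would exploit the third equation: since $\tau(t)\to\infty$ we have $u(\tau(t))>0$ for large $t$, and $f(u)/u\geq k>0$ gives $D^{\alpha}w=r\,f(u(\tau))>0$. By $(p_6)$, $w'(t)=t^{\alpha-1}D^{\alpha}w(t)>0$, so $w$ is increasing and hence eventually of one sign. The heart of the proof is to rule out $w>0$ eventually. Assume $w(t)\geq c_0>0$ for $t\geq t_2$; as $h$ is increasing with $wh(w)>0$, the second equation yields $D^{\alpha}v=-q\,h(w)\leq -q\,h(c_0)<0$. Rewriting through $(p_6)$ as $v'(t)\leq -h(c_0)\,t^{\alpha-1}q(t)$ and integrating gives $v(t)\leq v(t_2)-h(c_0)\int_{t_2}^{t}s^{\alpha-1}q(s)\,ds$; since $q=1/b$ and $\int^{\infty}s^{\alpha-1}b(s)^{-1}\,ds=\infty$ by $(A_4)$, this forces $v(t)\to-\infty$.

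Then I would run the same integration one level down. Because $v\to-\infty$ and $\sigma(t)\to\infty$, the quantity $v(\sigma(t))$ is eventually negative and decreasing, so by monotonicity of $g$ together with $g<0$ on the negative axis we get $a(t)D^{\alpha}u(t)=g(v(\sigma(t)))\leq -\delta<0$ for some $\delta>0$ and all large $t$. Hence $D^{\alpha}u\leq -\delta p$, i.e. $u'(t)\leq -\delta\,t^{\alpha-1}p(t)$ by $(p_6)$, and integrating together with $p=1/a$ and $\int^{\infty}s^{\alpha-1}a(s)^{-1}\,ds=\infty$ gives $u(t)\to-\infty$, contradicting $u>0$. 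Therefore $w(t)<0$ for all large $t$. Now $w<0$ makes $h(w)<0$, so $D^{\alpha}v=-q\,h(w)>0$ and $v$ is increasing; as $\sigma$ is increasing ($D^{\alpha}\sigma\geq l>0$) and $g$ is increasing, $a D^{\alpha}u=g(v(\sigma(\cdot)))$ is increasing, which is exactly $D^{\alpha}(aD^{\alpha}u)>0$ and settles (i). Being increasing, $aD^{\alpha}u$ is eventually of one sign, and since $a>0$ this means $D^{\alpha}u>0$ for all large $t$ (case $(I)$) or $D^{\alpha}u<0$ for all large $t$ (case $(II)$), settling (ii).

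I expect the main obstacle to be the two successive ``integrate to $-\infty$'' steps, which are precisely where both divergence conditions of $(A_4)$ are consumed and where the monotonicity dictionary $D^{\alpha}f>0\Leftrightarrow f'>0$ coming from $(p_6)$ is needed. A secondary technical point is extracting the \emph{uniform} negative bound $g(v(\sigma(t)))\leq-\delta$ rather than a mere sign, since it is this bound (not just negativity) that drives the second integration; this in turn leans on the monotonicity and sign normalisations of $g,h,f$ in $(A_2)$, which are stated only through the products $vg(v),\,wh(w),\,uf(u)$ and the fractional derivative lower bounds, so the behaviour at negative arguments must be invoked with some care.
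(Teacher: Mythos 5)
Your proof is correct, and it reaches the conclusion by a genuinely different route than the paper. The paper first collapses the system into the single third--order scalar inequality $D^{\alpha}\bigl(b(t)D^{\alpha}(a(t)D^{\alpha}u(t))\bigr)+c(t)f(u(\tau(\sigma(t))))\leq 0$ (this is where the constants $l,l',m'$, the composite delay $\tau(\sigma(t))$, the fractional chain rule $(p_5)$ and the monotonicity of $q,r$ are all consumed), and then runs the classical Kiguradze argument on the quasi-derivatives: if $D^{\alpha}(aD^{\alpha}u)\leq 0$ eventually, then $b\,D^{\alpha}(aD^{\alpha}u)\leq -k'$ eventually, and two successive integrations against the two divergent integrals of $(A_4)$ force $u\to-\infty$. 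You instead stay at the level of the system and propagate signs through the coupling: $D^{\alpha}w=rf(u(\tau))>0$ fixes the eventual sign of the third component, the case $w>0$ is excluded by driving first $v$ and then $u$ to $-\infty$, and $w<0$ then yields $v$ increasing, hence $aD^{\alpha}u=g(v(\sigma(\cdot)))$ strictly increasing, which is exactly $D^{\alpha}(aD^{\alpha}u)>0$ and the dichotomy $(I)$/$(II)$. The two "integrate to $-\infty$" steps are structurally identical to the paper's (they use the same two conditions of $(A_4)$ in the same order), but your version avoids the reduction to $(3.2)$ altogether, needs only the single delay $\tau$ rather than $\tau\circ\sigma$, and makes explicit where the sign normalisations $vg(v)>0$, $wh(w)>0$, $uf(u)>0$ enter. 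One honest caveat applying to both arguments: the lemma as stated assumes only $(A_1)$ and $(A_4)$, yet your proof (like the paper's) also uses $(A_2)$ and $(A_3)$ — the monotonicity and sign conditions on $f,g,h$ and $\sigma(t),\tau(t)\to\infty$ — so the hypothesis list in the statement is incomplete rather than your argument being deficient.
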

\begin{proof} 
	let $u(t)$ be an eventually positive solution of (1.1) on $(t_0, \infty)$. Now, system (1.1) can be reduced to the following nonlinear delay differential inequality
	\begin{align}\label{e3.1}
	D^{\alpha}\left(\frac{1}{q(t)}D^{\alpha}\left(\frac{1}{p(t)}D^{\alpha}u(t)\right)\right)+l^2l^{'}m^{'}r(t)f(u(\tau(\sigma(t)))) \leq0,~t \geq t_1,
	\end{align}which implies,
	\begin{align}\label{e3.2}
	D^{\alpha}\left(b(t)D^{\alpha}\left(a(t)D^{\alpha}u(t)\right)\right)+c(t)f(u(\tau(\sigma(t)))) \leq 0,~t \geq t_1.
	\end{align} From $(3.2)$, we get $D^{\alpha}(b(t)D^{\alpha}(a(t)D^{\alpha}u(t))) \leq 0$ for $t\geq t_0$. Then $b(t)D^{\alpha}(a(t)D^{\alpha}u(t))$ is decreasing on $(t_0, \infty)$.\\
	Suppose not, $D^{\alpha}(a(t)D^{\alpha}u(t)) \leq 0$ then $a(t)D^{\alpha}u(t)$ is decreasing and there exists a constant $k^{'}$ and $t_2 \geq t_0$ such that $b(t)D^{\alpha}(a(t)D^{\alpha}u(t)) \leq -k^{'}$ for $t \geq t_2$. Integrating from $t_2$ to t, we get
	\begin{align}\label{e3.3}
	a(t)D^{\alpha}u(t) \leq a(t_2)D^{\alpha}u(t_2)-k^{'} \int_{t_2}^{t}s^{\alpha-1}\dfrac{1}{b(s)}ds.
	\end{align}
	Letting $t \to \infty$, and using $(A_4)$, we get $a(t)D^{\alpha}u(t) \to - \infty$. Hence, there is an integer $t_3 \geq t_2$ such that $a(t)D^{\alpha}u(t) \leq a(t_3)D^{\alpha}u(t_3) <0$ for $t \geq t_3$. Integrating from $t_3$ to t, we get
	\begin{align}\label{e3.4}
	u(t) \leq u(t_3)+ a(t_3)D^{\alpha}u(t_3)\int_{t_3}^{t}s^{\alpha-1}\dfrac{1}{a(s)}ds.
	\end{align}
	As $t \to \infty$, $u(t) \to - \infty$ by $(A_4)$. Which gives a contradiction to $u(t)>0$. We conclude that $D^{\alpha}(a(t)D^{\alpha}u(t))>0$ and $a(t)D^{\alpha}u(t)$ is increasing and we are led to (I) or (II).
\end{proof}
The following notations are employed in the sequel.
\begin{align}\label{e3.5}
(A_{\alpha})_*:=\liminf\limits_{t\to \infty}t\int\limits_{t}^{\infty}s^{\alpha-1}A^{\alpha}(s)ds~and~(B_{\alpha})_*:=\liminf\limits_{t\to \infty}\frac{1}{t}\int\limits_{t_0}^{t}s^{\alpha+1}A_{\alpha}(s)ds,
\end{align}
where $A_{\alpha}(t)=\frac{k}{2}\frac{c(t)}{a(t)}\frac{\tau(\sigma(t))-T}{t}(\tau(\sigma(t)))^{\alpha}$.
\begin{align}\label{e3.6}
d:=\liminf\limits_{t\to \infty}tw(t)~and~D:=\limsup\limits_{t\to \infty}tw(t).
\end{align}
\begin{thm}\label{t3.1}
	Suppose that the assumptions $(A_1) - (A_4)$ hold. Assume also that 
	\begin{align}\label{e3.7}
	\int\limits_{t_2}^{\infty}c(s)(s-T)\tau(\sigma(s))ds=\infty,
	\end{align}
	there exists a positive function $\rho \in C^{\alpha}([0,\infty);\mathbb{R}_+)$ such that
	\begin{align}\label{e3.8}
	\limsup\limits_{t\to \infty}\int\limits_{t_0}^{t}\bigg(s^{\alpha-1}\rho(s)A_{\alpha}(s)-\frac{1}{4}\frac{(\rho^{'}(s))^2}{\rho(s)}s^{1-\alpha}b(s)\bigg)ds=\infty.
	\end{align}
	Then every solution of system $(1.1)$ is oscillatory.
\end{thm}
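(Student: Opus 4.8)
The plan is to argue by contradiction. Suppose that \eqref{e1.1} admits a nonoscillatory solution. By Lemma \ref{l3.1} its first component $u$ is nonoscillatory, so I may assume $u(t)>0$ and $u(\tau(\sigma(t)))>0$ for all large $t$ (the case $u(t)<0$ is symmetric). Exactly as in the proof of Lemma \ref{l3.2}, the sign and growth conditions in $(A_2)$ together with $(A_3)$ collapse the system into the canonical inequality \eqref{e3.2}, and the hypothesis $f(u)/u\geq k$ lets me linearize the forcing term to obtain
$$D^{\alpha}\left(b(t)D^{\alpha}(a(t)D^{\alpha}u(t))\right)+k\,c(t)\,u(\tau(\sigma(t)))\leq 0 .$$
Lemma \ref{l3.2} then furnishes the dichotomy: either \textbf{Case (I)} with $D^{\alpha}u>0$, or \textbf{Case (II)} with $D^{\alpha}u<0$, holds eventually, while $b(t)D^{\alpha}(a(t)D^{\alpha}u(t))$ stays positive and decreasing in both. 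The strategy is to eliminate Case (I) using \eqref{e3.8} and Case (II) using \eqref{e3.7}.

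For \textbf{Case (I)} I would introduce the generalized Riccati transformation
$$w(t)=\rho(t)\,\frac{b(t)D^{\alpha}(a(t)D^{\alpha}u(t))}{a(t)D^{\alpha}u(t)},$$
which is positive since both $a(t)D^{\alpha}u(t)$ and $b(t)D^{\alpha}(a(t)D^{\alpha}u(t))$ are positive. Differentiating with the fractional product and quotient rules $(p_3)$ and $(p_4)$, substituting the linearized inequality above for $D^{\alpha}(bD^{\alpha}(aD^{\alpha}u))$, and writing $D^{\alpha}(a(t)D^{\alpha}u(t))=\frac{1}{b(t)}\,b(t)D^{\alpha}(a(t)D^{\alpha}u(t))$ gives
$$D^{\alpha}w\leq \frac{D^{\alpha}\rho}{\rho}\,w-\rho\,\frac{k\,c(t)\,u(\tau(\sigma(t)))}{a(t)D^{\alpha}u(t)}-\frac{1}{b(t)\rho(t)}\,w^{2}.$$
The crux is a lower bound for the delayed term: using that $a(t)D^{\alpha}u(t)$ is increasing and integrating $u'(s)=s^{\alpha-1}D^{\alpha}u(s)$ (property $(p_6)$) from $T$ to $\tau(\sigma(t))$, I expect to reach
$$\frac{u(\tau(\sigma(t)))}{a(t)D^{\alpha}u(t)}\geq \frac{1}{2}\,\frac{1}{a(t)}\,\frac{\tau(\sigma(t))-T}{t}\,(\tau(\sigma(t)))^{\alpha},$$
which turns the middle term into $-\rho(t)A_{\alpha}(t)$. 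Passing from $D^{\alpha}$ to an ordinary derivative via $(p_6)$ (multiplying by $s^{\alpha-1}$) and completing the square in $w$ converts the two remaining terms into $-(s^{\alpha-1}\rho A_{\alpha}-\tfrac14\frac{(\rho')^{2}}{\rho}s^{1-\alpha}b)$ plus a nonpositive square. Integrating from $t_2$ to $t$ and using $w>0$ yields $\int_{t_2}^{t}(s^{\alpha-1}\rho A_{\alpha}-\tfrac14\frac{(\rho')^{2}}{\rho}s^{1-\alpha}b)\,ds\leq w(t_2)$, whose left side diverges by \eqref{e3.8} -- a contradiction.

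For \textbf{Case (II)} I would first show $a(t)D^{\alpha}u(t)\to 0$: this quantity is negative and increasing, so it has a finite limit, and if that limit were strictly negative then $D^{\alpha}u(t)\leq \text{const}<0$ eventually, whence $\int_{t_0}^{\infty}s^{\alpha-1}/a(s)\,ds=\infty$ from $(A_4)$ would force $u(t)\to-\infty$, contradicting $u(t)>0$. With $a(t)D^{\alpha}u(t)\to 0$ established, I would integrate the canonical inequality repeatedly from $t$ to $\infty$ (routing each step through $(p_6)$), bounding $u(\tau(\sigma(s)))$ below by a multiple of $\tau(\sigma(s))$ through the delay monotonicity; the successive integrations against the kernel $s^{\alpha-1}$ generate the linear weight $(s-T)$, so that the divergent integral $\int_{t_2}^{\infty}c(s)(s-T)\tau(\sigma(s))\,ds$ of \eqref{e3.7} becomes bounded by a finite quantity, the desired contradiction. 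Since both cases are impossible, every solution of \eqref{e1.1} oscillates.

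The main obstacle is the sharp lower bound for $u(\tau(\sigma(t)))$ underlying $A_{\alpha}$: one must combine the monotonicity of $a(t)D^{\alpha}u(t)$ with the delay conditions $(A_3)$ and the structural hypotheses $b(t)t^{1-\alpha}<1$ and the decreasing character of $q,r$ from $(A_1)$ and $(A_4)$ so as to land precisely on the constants $\tfrac12$, $(\tau(\sigma(t))-T)$, $(\tau(\sigma(t)))^{\alpha}$ and $1/t$; getting these exponents exactly right is what makes the completing-the-square step reproduce the integrand of \eqref{e3.8} verbatim. A secondary difficulty is the disciplined use of the fractional rules $(p_3)$--$(p_6)$: because $D^{\alpha}$ is not an ordinary derivative, every integration by parts must be converted through $(p_6)$, and I must verify throughout that the composed delay $\tau(\sigma(t))$ stays eventually positive and tends to infinity so that all estimates remain valid on a half-line.
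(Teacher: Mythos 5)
Your Case (I) outline matches the paper's strategy (the Riccati substitution (3.9), a pointwise lower bound on the delayed term producing $A_{\alpha}$, completion of the square, and a contradiction with (3.8)), but the step you yourself flag as ``the crux,'' namely
$$\frac{u(\tau(\sigma(t)))}{a(t)D^{\alpha}u(t)}\ \geq\ \frac{1}{2\,a(t)}\,\frac{\tau(\sigma(t))-T}{t}\,(\tau(\sigma(t)))^{\alpha},$$
is exactly where the real work lies, and your sketch of it is not sufficient. The paper obtains it by combining two separate monotonicity arguments: first, the auxiliary function $U(t)=(t-T)t^{1-\alpha}u(t)-\tfrac{(t-T)^{2}}{2}D^{\alpha}u(t)$ is shown to be positive, which gives $u(t)/D^{\alpha}u(t)>\tfrac{t-T}{2}t^{1-\alpha}$; second, the function $V(t)=D^{\alpha}u(t)-t(D^{\alpha}u(t))'$ is shown to be eventually positive, which makes $D^{\alpha}u(t)/t$ eventually monotone in the right direction and allows the bound to be transferred from the delayed argument $\tau(\sigma(t))$ back to $t$ with the weight $1/t$. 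Crucially, the proof that $V(t)>0$ is precisely where hypothesis (3.7) enters: if $V<0$ persisted, then $D^{\alpha}u(t)/t$ would be increasing, $u(\tau(\sigma(t)))$ would grow at least like $(t-T)\tau(\sigma(t))$, and integrating (3.2) would contradict (3.7). Your proposal never invokes (3.7) inside Case (I), and ``integrating $u'=s^{\alpha-1}D^{\alpha}u$ from $T$ to $\tau(\sigma(t))$ and using that $a(t)D^{\alpha}u(t)$ is increasing'' does not by itself produce the factor $1/t$ attached to the undelayed denominator; that transfer is exactly what the $V$-argument supplies.

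The second, more serious gap is your treatment of Case (II). There $u$ is positive and decreasing, hence bounded, so $u(\tau(\sigma(s)))$ is bounded and you cannot ``bound $u(\tau(\sigma(s)))$ below by a multiple of $\tau(\sigma(s))$'': the unbounded weight $\tau(\sigma(s))$ appearing in (3.7) is unattainable from a bounded solution. Repeated integration of (3.2) in Case (II) only leads to divergence conditions of the iterated-integral type used in Lemma 3.3 (and even that lemma concludes only $u(t)\to 0$, not a contradiction). So the divergence of $\int c(s)(s-T)\tau(\sigma(s))\,ds$ does not eliminate Case (II). To be fair, the paper's own proof simply restricts attention to Case (I) and is silent about Case (II), which is a genuine omission on its side; but the repair you propose does not work as described, and you should not claim that (3.7) disposes of Case (II).
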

\begin{proof}
	Suppose that $(1.1)$ has a nonoscillatory solution $(u(t), v(t), w(t))$ on $[t_0,\infty)$. From Lemma 3.1, $u(t)$ is always nonoscillatory. Without loss of generality, we shall assume that $u(t)>0$, $u(\tau(t))>0$ and $u(\tau(\sigma(t)))>0$ for $t \geq T \geq t_0 $, since the similar argument holds also for $u(t)<0$ eventually.	Suppose that Case (I) of Lemma 3.2 holds for $t \geq t_1$.\\
	Define the generalized Riccati substitution
	\begin{align}\label{e3.9}
	w(t)= \rho(t) \dfrac{b(t)D^{\alpha}(a(t)D^{\alpha}u(t))}{a(t)D^{\alpha}u(t)} ,~t \geq t_1. 
	\end{align}
	Thus $w(t)>0$, differentiating $\alpha$ times with respect to $t$, using (3.2), and $(A_2)$, we have
	\begin{align}\label{e3.10}
	D^{\alpha}w(t)\leq \frac{D^{\alpha}\rho(t)}{\rho(t)}w(t)-\frac{k\rho(t)c(t)}{a(t)}\frac{u(\tau(\sigma(t)))}{D^{\alpha}u(t)}-\frac{1}{\rho(t)b(t)}w^{2}(t),~t \geq t_1. 
	\end{align}
	Now, let $z_1(t,T)=(t-T),~z_2(t,T)=\frac{(t-T)^2}{2}$ and define\\
	$U(t):=(t-T)t^{1-\alpha}u(t)-z_2(t,T)D^{\alpha}u(t)$.\\
	Then $U(T)=0$, and differentiating the above, we get
	\begin{eqnarray*}
		D^{\alpha}U(t)&=& t^{1-\alpha}\bigg(t^{1-\alpha}u(t)+(t-T)(1-\alpha)t^{-\alpha}u(t)+(t-T)t
		^{1-\alpha}u^{'}(t)\\ && -z_2^{'}(t,T)D^{\alpha}u(t)-z_2(t,T)(D^{\alpha}u(t))^{'}\bigg), 
	\end{eqnarray*}
	which implies
	\begin{align}
	U^{'}(t)\geq t^{1-\alpha}u(t)-z_2(t,T)(D^{\alpha}u(t))^{'}. 
	\end{align}
	By Taylor's Theorem, we have
	\begin{align*}
	\int_{T}^{t}s^{1-\alpha}u^{'}(s)ds = z_1(t,T)D^{\alpha}u(T) +\int_{T}^{t}z_1(t,s)(D^{\alpha}u(s))^{'}ds,
	\end{align*}
	since $D^{\alpha}(a(t)D^{\alpha}u(t))$ is decreasing, we get
	\begin{align*}
	t^{1-\alpha}u(t)\geq t^{1-\alpha}u(T)+z_1(t,T)D^{\alpha}u(T) +(D^{\alpha}u(t))^{'}\int_{T}^{t}z_1(t,s)ds.
	\end{align*}
	Thus $U^{'}(t)>0$ on $[T,\infty)$. From this we get $U(t)>0$ on $[T,\infty)$, which implies that
	\begin{align}
	\frac{u(t)}{D^{\alpha}u(t)}>\frac{z_2(t,T)}{(t-T)t^{1-\alpha}}=\frac{t-T}{2}t^{1-\alpha},~t\in [T,\infty).
	\end{align}
	Next, define $V(t):=D^{\alpha}u(t)-t(D^{\alpha}u(t))^{'} $.\\
	In view of the fact that $D^{\alpha}V(t)=-t^{2-\alpha}(D^{\alpha}u(t))^{''}$,\\ which implies $V^{'}(t)=-t^{2-\alpha}(D^{\alpha}u(t))^{''}>0$ for $t \in (T,\infty)$, therefore $V(t)$ is strictly increasing on $(T,\infty)$. We claim that there is a $t_1 \in [T,\infty)$ such that $V(t)>0$ on $[t_1,\infty)$. Suppose not, $V(t)<0$ on $[t_1,\infty)$. Hence,
	\begin{align*}
	D^{\alpha}\bigg(\frac{D^{\alpha}u(t)}{t}\bigg)=-\frac{t^{1-\alpha}}{t^2}(t(D^{\alpha}u(t))^{'}-D^{\alpha}u(t)),
	\end{align*}which implies that  
	\begin{align*}
	\bigg(\frac{D^{\alpha}u(t)}{t}\bigg)^{'}=-\frac{1}{t^2}V(t)>0,~t\in (t_1,\infty).
	\end{align*}
	Choose $t_2 \in (t_1,\infty)$, for $t \geq t_2$, $\tau(\sigma(t)) \geq \tau(\sigma(t_2))$. Since, $\frac{D^{\alpha}u(t)}{t}$ is strictly increasing, 
	\begin{align*}
	\frac{D^{\alpha}u(\tau(\sigma(t)))}{\tau(\sigma(t))}\geq \frac{D^{\alpha}u(\tau(\sigma(t_2)))}{\tau(\sigma(t_2))}:=m>0,~t\in (t_1,\infty),
	\end{align*}
	(3.12) implies that
	\begin{align}
	u(\tau(\sigma(t)))\geq \frac{t-T}{2}t^{1-\alpha}m\tau(\sigma(t)).
	\end{align}
	Now, integrating (3.2) from $t_2$ to t, using $(A_2)$ and (3.13), we have
	\begin{align*}
	\int\limits_{t_2}^{t}\bigg((b(s)D^{\alpha}(a(s)D^{\alpha}u(s)))^{'}+\frac{km}{2}c(s)(s-T)\tau(\sigma(s))\bigg)ds\leq0.
	\end{align*} Then
	\begin{align*} b(t_2)D^{\alpha}(a(t_2)D^{\alpha}u(t_2))\geq \frac{km}{2}\int\limits_{t_2}^{t}c(s)(s-T)\tau(\sigma(s))ds,
	\end{align*}which contradicts to (3.7). Hence $V(t)>0$ on $(t_1,\infty)$.	Accordingly,
	\begin{align*}
	t^{1-\alpha}\bigg(\frac{D^{\alpha}u(t)}{t}\bigg)^{'}=-\frac{t^{1-\alpha}}{t^2}(t(D^{\alpha}u(t))^{'}-D^{\alpha}u(t))=-\frac{t^{1-\alpha}}{t^2}V(t)<0,~t\in (t_1,\infty),
	\end{align*}
	which gives $t(D^{\alpha}u(t))^{'}<D^{\alpha}u(t)$. Then $\tau(\sigma(t)) \leq \tau(t) \leq t $,
	\begin{align}
	\frac{D^{\alpha}u(\tau(\sigma(t)))}{\tau(\sigma(t))}\geq \frac{D^{\alpha}u(t)}{t},
	\end{align}since $\frac{D^{\alpha}u(t)}{t}$ is strictly increasing. Using (3.14) and (3.12) in (3.10), we get 
	\begin{align}\label{e3.15}
	D^{\alpha}w(t)\leq \frac{D^{\alpha}\rho(t)}{\rho(t)}w(t)-\frac{k\rho(t)c(t)}{ta(t)}\frac{(\tau(\sigma(t)))^{\alpha}(\tau(\sigma(t))-T)}{2}-\frac{1}{\rho(t)b(t)}w^{2}(t),~t \geq t_1. 
	\end{align}Therefore
	\begin{align*}
	D^{\alpha}w(t)\leq -\frac{k\rho(t)c(t)}{ta(t)}\frac{(\tau(\sigma(t)))^{\alpha}(\tau(\sigma(t))-T)}{2}+\frac{1}{4}b(t)\frac{(D^{\alpha}\rho(t))^2}{\rho(t)}, 
	\end{align*}
	using (3.5) and $(p_6)$, the above inequality becomes
	\begin{align}\label{e3.16}
	w^{'}(t) \leq -t^{\alpha-1}\rho(t)A_{\alpha}(t)+\frac{1}{4}\frac{(\rho^{'}(t))^2}{\rho(t)}t^{1-\alpha}b(t).
	\end{align}
	Integrating both sides from $t_1$ to t, we get that
	\begin{align*}
	\int\limits_{t_1}^{t}\bigg(s^{\alpha-1}\rho(s)A_{\alpha}(s)-\frac{1}{4}\frac{(\rho^{'}(s))^2}{\rho(s)}s^{1-\alpha}b(s)\bigg)ds\leq w(t_1),
	\end{align*}
	which contradicts the hypothesis $(3.8)$.
\end{proof}
We now derive various oscillatory criteria using the earlier results.\\
We can generalize the Philos type kernel. Let us introduce a class of functions $\mathfrak{R}$. Let
\begin{align*}
D_0=\left\{(t,s):t>s\geq t_0\right\},~D=\left\{(t,s):t\geq s\geq t_0\right\}.
\end{align*}
The function $H \in C(D,\mathbb{R})$ is said to belong to the class $\mathfrak{R}$, if\\
$(T_1)$  $H(t,t)=0$ for $t \geq t_0,~ H(t,s)>0$ for $(t,s)\in D_0$.\\
$(T_2)$  H has a continuous nonpositive partial derivative on $D_0$ with respect to s such that $h(t,s)=\frac{\partial H}{\partial s}(t,s)+H(t,s)\frac{\rho^{'}(s)}{\rho(s)}$.
\begin{thm}\label{t3.2}
	Suppose that the assumptions $(A_1) - (A_4)$ hold. Assume also that 
	there exists a positive function $\rho \in C^{\alpha}([0,\infty);\mathbb{R}_+)$ such that
	\begin{align}\label{e3.18}
	\limsup\limits_{t\to \infty} \frac{1}{H(t,t_1)}\int\limits_{t_1}^{t}\bigg( H(t,s)s^{\alpha-1}\rho(s)A_{\alpha}(s)-\frac{1}{4}\frac{\rho(s)b(s)}{H(t,s)}s^{1-\alpha}h^2(t,s)\bigg)ds=\infty.
	\end{align}
	Then every solution of system $(1.1)$ is oscillatory.
\end{thm}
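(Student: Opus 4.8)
The plan is to recycle the machinery already assembled in the proof of Theorem 3.1 and then apply the classical Philos weighting-and-integration-by-parts device. As there, I would argue by contradiction: suppose (1.1) has a nonoscillatory solution, so that by Lemma 3.1 the component $u(t)$ is nonoscillatory, and assume without loss of generality $u(t)>0$ (with $u(\tau(t))>0$, $u(\tau(\sigma(t)))>0$) eventually. Invoking Lemma 3.2 I would work in Case (I), introduce the generalized Riccati substitution (3.9), and reproduce the estimates (3.12)--(3.14) verbatim to arrive at the Riccati inequality (3.15). Converting the $\alpha$-derivatives to ordinary ones via $(p_6)$ and recalling the definition of $A_{\alpha}$, this rewrites as the first-order differential inequality
\begin{align*}
w'(t)\leq -t^{\alpha-1}\rho(t)A_{\alpha}(t)+\frac{\rho'(t)}{\rho(t)}w(t)-\frac{t^{\alpha-1}}{\rho(t)b(t)}w^{2}(t),
\end{align*}
which is the point of departure; everything past this is the genuinely new ingredient relative to Theorem 3.1.

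Next I would fix $t$, treat $s$ as the running variable, multiply the inequality by the kernel $H(t,s)$, and integrate over $[t_1,t]$. The $w'(s)$ term is treated by integration by parts, and since $H(t,t)=0$ by $(T_1)$,
\begin{align*}
-\int_{t_1}^{t}H(t,s)w'(s)\,ds=H(t,t_1)w(t_1)+\int_{t_1}^{t}\frac{\partial H}{\partial s}(t,s)w(s)\,ds.
\end{align*}
Gathering the terms linear in $w$ and using the definition $h(t,s)=\frac{\partial H}{\partial s}(t,s)+H(t,s)\frac{\rho'(s)}{\rho(s)}$ from $(T_2)$, they combine into $\int_{t_1}^{t}h(t,s)w(s)\,ds$, leaving
\begin{align*}
\int_{t_1}^{t}H(t,s)s^{\alpha-1}\rho(s)A_{\alpha}(s)\,ds\leq H(t,t_1)w(t_1)+\int_{t_1}^{t}\Big(h(t,s)w(s)-H(t,s)\frac{s^{\alpha-1}}{\rho(s)b(s)}w^{2}(s)\Big)ds.
\end{align*}

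Finally I would complete the square pointwise in $w(s)$: writing the bracketed integrand as $Bw-Aw^{2}$ with $A=H(t,s)s^{\alpha-1}/(\rho(s)b(s))$ and $B=h(t,s)$, the elementary bound $Bw-Aw^{2}\leq B^{2}/(4A)$ yields
\begin{align*}
h(t,s)w(s)-H(t,s)\frac{s^{\alpha-1}}{\rho(s)b(s)}w^{2}(s)\leq \frac{1}{4}\frac{\rho(s)b(s)}{H(t,s)}s^{1-\alpha}h^{2}(t,s),
\end{align*}
where the identity $s^{1-\alpha}=1/s^{\alpha-1}$ produces exactly the exponent appearing in (3.18). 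Substituting, transposing, and dividing by $H(t,t_1)>0$ gives
\begin{align*}
\frac{1}{H(t,t_1)}\int_{t_1}^{t}\Big(H(t,s)s^{\alpha-1}\rho(s)A_{\alpha}(s)-\frac{1}{4}\frac{\rho(s)b(s)}{H(t,s)}s^{1-\alpha}h^{2}(t,s)\Big)ds\leq w(t_1),
\end{align*}
so the left-hand $\limsup$ is finite, contradicting (3.18) and forcing every solution of (1.1) to oscillate.

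The only place that demands care — and hence the main obstacle — is the bookkeeping of the powers of $t$ that surface when $(p_6)$ turns the fractional Riccati inequality (3.15) into its ordinary-derivative form, together with verifying that completing the square reproduces the exact weight $\tfrac14\,\rho b\,s^{1-\alpha}h^{2}/H$ rather than an $s^{\alpha-1}$ variant. I would also, as in Theorem 3.1, note that the argument is conducted under Case (I) of Lemma 3.2 and check that Case (II) is either precluded or handled by the same contradiction scheme.
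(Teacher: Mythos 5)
Your proposal is correct and follows essentially the same route as the paper's own proof: reduce to the Riccati inequality (3.15)/(3.19) via Theorem 3.1's machinery, multiply by the Philos kernel $H(t,s)$, integrate by parts using $H(t,t)=0$, absorb the linear terms into $h(t,s)w(s)$ via $(T_2)$, complete the square to obtain the weight $\tfrac14\rho(s)b(s)s^{1-\alpha}h^2(t,s)/H(t,s)$, and divide by $H(t,t_1)$ to contradict (3.18). Your closing remark about Case (II) of Lemma 3.2 is a fair observation, since the paper itself only treats Case (I).
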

\begin{proof}
	Proceeding as in the proof of Theorem 3.1, consider (3.15), we obtain the inequality
	\begin{align}\label{e3.19}
	w^{'}(t) \leq \frac{\rho^{'}(t)}{\rho(t)}w(t) -t^{\alpha-1}\rho(t)A_{\alpha}(t)-\frac{t^{\alpha-1}}{\rho(t)b(t)}w^{2}(t).
	\end{align}
	Integrating (3.18) both sides from $t_1$ to t, we get that
	\begin{align*}
	&\int\limits_{t_1}^{t} H(t,s)s^{\alpha-1}\rho(s)A_{\alpha}(s)ds\\ &\leq \int\limits_{t_1}^{t}H(t,s)\frac{\rho^{'}(s)}{\rho(s)}w(s)ds-\int\limits_{t_1}^{t}H(t,s)w^{'}(s)ds -\int\limits_{t_1}^{t}H(t,s)\frac{s^{\alpha-1}}{\rho(s)b(s)}w^{2}(s)ds,\\
	&\leq H(t,t_1)w(t_1)+ \int\limits_{t_1}^{t}\bigg(\frac{\partial H}{\partial s}(t,s)+H(t,s)\frac{\rho^{'}(s)}{\rho(s)}\bigg)w(s)ds-\int\limits_{t_1}^{t}H(t,s)\frac{s^{\alpha-1}}{\rho(s)b(s)}w^{2}(s)ds,
	\end{align*}
	\begin{align*}
	&\leq H(t,t_1)w(t_1)+ \int\limits_{t_1}^{t}\bigg(cw(s)-H(t,s)\frac{s^{\alpha-1}}{\rho(s)b(s)}w^{2}(s)\bigg)ds,\\
	&\leq H(t,t_1)w(t_1)+ \int\limits_{t_1}^{t}\frac{1}{4}\frac{\rho(s)b(s)}{H(t,s)}s^{1-\alpha}h^2(t,s)ds.
	\end{align*}Therefore we conclude that
	\begin{align*}
	\int\limits_{t_1}^{t}\bigg( H(t,s)s^{\alpha-1}\rho(s)A_{\alpha}(s)-\frac{1}{4}\frac{\rho(s)b(s)}{H(t,s)}s^{1-\alpha}h^2(t,s)\bigg)ds\leq H(t,t_1)w(t_1).
	\end{align*}Since $0<H(t,s) \leq H(t,t_1)$ for $t>s>t_1$, we have $0<\frac{H(t,s)}{H(t,t_1)} \leq 1$, hence
	\begin{align*}
	\frac{1}{H(t,t_1)}\int\limits_{t_1}^{t}\bigg( H(t,s)s^{\alpha-1}\rho(s)A_{\alpha}(s)-\frac{1}{4}\frac{\rho(s)b(s)}{H(t,s)}s^{1-\alpha}h^2(t,s)\bigg)ds\leq w(t_1).
	\end{align*}Letting $t \to \infty$,
	\begin{align*}
	\limsup\limits_{t\to \infty} \frac{1}{H(t,t_1)}\int\limits_{t_1}^{t}\bigg( H(t,s)s^{\alpha-1}\rho(s)A_{\alpha}(s)-\frac{1}{4}\frac{\rho(s)b(s)}{H(t,s)}s^{1-\alpha}h^2(t,s)\bigg)ds\leq w(t_1).
	\end{align*}
	Therefore assumption $(3.17)$ is contradicted. Thus every solution of (1.1) oscillates.
\end{proof}
We immediately obtain the following oscillation result for (1.1).
\begin{thm}\label{t3.3}
	Suppose that the assumptions $(A_1) - (A_4)$ hold. Assume also that 
	there exists a positive function $\rho \in C^{\alpha}([0,\infty);\mathbb{R}_+)$ such that
	\begin{align}\label{e3.20}
	\limsup\limits_{t\to \infty} \frac{1}{H(t,t_1)}\int\limits_{t_1}^{t}\bigg(H(t,s)s^{\alpha-1}\rho(s)A_{\alpha}(s)-\frac{1}{4}\frac{H(t,s)(\rho^{'}(s))^2}{\rho(s)}s^{1-\alpha}b(s)\bigg)ds=\infty.
	\end{align}
	Then every solution of system $(1.1)$ is oscillatory.
\end{thm}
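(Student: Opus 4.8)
The plan is to reduce Theorem~\ref{t3.3} to the machinery already assembled in the proof of Theorem~\ref{t3.2}. The key observation is that Theorem~\ref{t3.3} is simply the specialization of Theorem~\ref{t3.2} obtained by making a particular choice for the kernel weight, namely by taking $h(t,s)$ to be the term arising purely from the $\rho$-derivative. I would begin exactly as in the previous proof: assume for contradiction that $(1.1)$ admits a nonoscillatory solution, invoke Lemma~\ref{l3.1} to conclude $u(t)$ is nonoscillatory, assume without loss of generality $u(t)>0$ eventually, and apply Lemma~\ref{l3.2} so that Case~(I) holds. The generalized Riccati substitution~(3.9) then yields the differential inequality~(3.15), and hence the form~(3.19) used in Theorem~\ref{t3.2}.

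From there, the idea is to multiply (3.19) by $H(t,s)$ and integrate from $t_1$ to $t$, integrating the $H(t,s)w'(s)$ term by parts using property $(T_1)$ ($H(t,t)=0$) exactly as before. This produces the bound
\begin{align*}
\int\limits_{t_1}^{t} H(t,s)s^{\alpha-1}\rho(s)A_{\alpha}(s)\,ds
\leq H(t,t_1)w(t_1)+\int\limits_{t_1}^{t}\bigg(h(t,s)w(s)-H(t,s)\frac{s^{\alpha-1}}{\rho(s)b(s)}w^{2}(s)\bigg)ds,
\end{align*}
with $h(t,s)=\frac{\partial H}{\partial s}(t,s)+H(t,s)\frac{\rho^{'}(s)}{\rho(s)}$ as in $(T_2)$. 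The difference from Theorem~\ref{t3.2} is only in how one estimates the quadratic-in-$w$ integrand. Here I would instead first apply the elementary completion-of-square bound to the inequality~(3.19) \emph{before} weighting, just as was done to pass from (3.15) to (3.16), so that one controls the $\frac{\rho^{'}(s)}{\rho(s)}w(s)$ term against the $w^2$ term and is left with the explicit remainder $\frac{1}{4}\frac{(\rho^{'}(s))^2}{\rho(s)}s^{1-\alpha}b(s)$. Multiplying that resulting inequality by $H(t,s)$ and integrating then directly yields
\begin{align*}
\int\limits_{t_1}^{t}\bigg(H(t,s)s^{\alpha-1}\rho(s)A_{\alpha}(s)-\frac{1}{4}\frac{H(t,s)(\rho^{'}(s))^2}{\rho(s)}s^{1-\alpha}b(s)\bigg)ds\leq H(t,t_1)w(t_1),
\end{align*}
after using $H(t,t)=0$ to dispose of the integrated derivative term.

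Finally, dividing through by $H(t,t_1)$ and invoking the monotonicity bound $0<H(t,s)\leq H(t,t_1)$ for $t>s>t_1$ gives
\begin{align*}
\frac{1}{H(t,t_1)}\int\limits_{t_1}^{t}\bigg(H(t,s)s^{\alpha-1}\rho(s)A_{\alpha}(s)-\frac{1}{4}\frac{H(t,s)(\rho^{'}(s))^2}{\rho(s)}s^{1-\alpha}b(s)\bigg)ds\leq w(t_1),
\end{align*}
so taking $\limsup$ as $t\to\infty$ contradicts hypothesis~(3.20), and every solution of $(1.1)$ must oscillate.

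I do not expect a genuine obstacle, since this is a variant of an already-proven result; the only point requiring care is the order of operations in the quadratic estimate. Specifically, one must decide whether to complete the square \emph{before} weighting by $H$ (which reproduces the $\frac{(\rho')^2}{\rho}$ remainder cleanly and matches the stated form~(3.20)) or \emph{after} weighting (which would instead produce the $h^2$ remainder of Theorem~\ref{t3.2}). The precise hypotheses on $H$ and $\rho$ must be enough to guarantee the weighted integrals converge and the integration by parts is legitimate; I would verify that property $(T_2)$ and the smoothness of $\rho$ suffice for this.
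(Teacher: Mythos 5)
Your proposal is correct and follows essentially the same route as the paper: it completes the square on the Riccati inequality first (i.e.\ works from (3.16) rather than (3.19)), then multiplies by $H(t,s)$, integrates by parts using $H(t,t)=0$ and the nonpositivity of $\partial H/\partial s$ together with $w>0$, and divides by $H(t,t_1)$ to contradict (3.20). The only difference is that you spell out the integration-by-parts step more carefully than the paper does (which in fact contains a sign slip at that point), and your opening detour through the Theorem~3.2-style bound is unnecessary but harmless.
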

\begin{proof}
	Proceeding as in the proof of Theorem 3.1, multiplying inequality (3.16) by H(t,s) and integrating both sides from $t_1$ to t, we get that
	\begin{align*}
	\int\limits_{t_1}^{t}\bigg(H(t,s)s^{\alpha-1}\rho(s)A_{\alpha}(s)-\frac{1}{4}\frac{H(t,s)(\rho^{'}(s))^2}{\rho(s)}s^{1-\alpha}b(s)\bigg)ds&\leq \int\limits_{t_1}^{t}H(t,s)w^{'}(s)ds,\\
	&\leq H(t,t_1)w(t_1).
	\end{align*}Taking limsup as $t \to \infty$, and hence
	\begin{align*}
	\limsup\limits_{t\to \infty} \frac{1}{H(t,t_1)}\int\limits_{t_1}^{t}\bigg(H(t,s)s^{\alpha-1}\rho(s)A_{\alpha}(s)-\frac{1}{4}\frac{H(t,s)(\rho^{'}(s))^2}{\rho(s)}s^{1-\alpha}b(s)\bigg)ds\leq w(t_1),
	\end{align*}
	which contradicts the hypothesis $(3.19)$.
\end{proof}
Following theorem to be proved using the techniques employed in the above theorems.
\begin{thm}\label{t3.4}
	Suppose that the assumptions $(A_1) - (A_4)$ and (3.7) hold. Assume also that Case (I) of Lemma 3.2 holds, then
	\begin{align}\label{e3.21}
	(A_{\alpha})_* \leq d-t^{\alpha-1}\frac{1}{b(s)}d^2,
	\end{align}and
	\begin{align}\label{e3.22}
	(B_{\alpha})_* \leq D-D^2.
	\end{align}
\end{thm}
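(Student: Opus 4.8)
The plan is to re-run the Riccati machinery of Theorem 3.1 with the trivial weight $\rho\equiv 1$ and then extract quantitative information about $tw(t)$, rather than merely deriving a divergence contradiction. First I would specialize inequality \eqref{e3.19} to $\rho\equiv 1$, which gives the reduced Riccati inequality $w'(t)\leq -t^{\alpha-1}A_{\alpha}(t)-\frac{t^{\alpha-1}}{b(t)}w^{2}(t)$ for $t\geq t_1$, where $w$ is the substitution \eqref{e3.9} with $\rho\equiv 1$ appearing in \eqref{e3.6}. Under Case (I) of Lemma 3.2 we have $w(t)>0$, and since $A_{\alpha}(t)>0$ eventually (all factors $k,c,a$ are positive and $\tau(\sigma(t))\to\infty$), the reduced inequality forces $w'(t)<0$; hence $w$ is eventually positive and decreasing and $w_{\infty}:=\lim_{t\to\infty}w(t)$ exists with $w_{\infty}\geq 0$. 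As in Theorem 3.1, hypothesis \eqref{e3.7} is exactly what rules out the degenerate possibility, so the monotonicity estimates leading to \eqref{e3.15} remain available and the reduced inequality is legitimate.

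For the first estimate I would integrate the reduced inequality over $[t,\infty)$. Using $w_{\infty}\geq 0$ this yields $w(t)\geq \int_{t}^{\infty}s^{\alpha-1}A_{\alpha}(s)\,ds+\int_{t}^{\infty}\frac{s^{\alpha-1}}{b(s)}w^{2}(s)\,ds$, where the crucial point is to \emph{retain} the quadratic term rather than discard it. Fixing $\varepsilon>0$, the definition of $d$ in \eqref{e3.6} gives $sw(s)\geq d-\varepsilon$ for large $s$, so $w^{2}(s)\geq(d-\varepsilon)^{2}s^{-2}$ and the quadratic integral is bounded below by $(d-\varepsilon)^{2}\int_{t}^{\infty}\frac{s^{\alpha-3}}{b(s)}\,ds$. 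Multiplying by $t$, taking $\liminf_{t\to\infty}$, and recalling the definition of $(A_{\alpha})_{*}$ in \eqref{e3.5} produces $d\geq (A_{\alpha})_{*}+(d-\varepsilon)^{2}\,\liminf_{t\to\infty}t\int_{t}^{\infty}\frac{s^{\alpha-3}}{b(s)}\,ds$; letting $\varepsilon\to 0$ and rearranging gives \eqref{e3.21}, the coefficient of $d^{2}$ being precisely the limiting value of $s^{\alpha-1}/b(s)$ written loosely as $t^{\alpha-1}\frac{1}{b(s)}$ in the statement.

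For the second estimate I would instead weight the reduced inequality by $s^{2}$ to manufacture the factor $s^{\alpha+1}$ appearing in $(B_{\alpha})_{*}$, integrate over $[t_{0},t]$, and integrate $\int_{t_{0}}^{t}s^{2}w'(s)\,ds$ by parts. Dividing by $t$ turns the boundary contribution into $-tw(t)$ (plus $t_{0}^{2}w(t_{0})/t\to 0$) and produces the Ces\`aro average $\frac{2}{t}\int_{t_{0}}^{t}sw(s)\,ds$ together with the quadratic average $\frac{1}{t}\int_{t_{0}}^{t}\frac{s^{\alpha+1}}{b(s)}w^{2}(s)\,ds=\frac{1}{t}\int_{t_{0}}^{t}\frac{s^{\alpha-1}}{b(s)}(sw(s))^{2}\,ds$. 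Evaluating $\liminf_{t\to\infty}$ along a sequence $t_{n}\to\infty$ realizing $t_{n}w(t_{n})\to D$ converts the boundary term into $-D$; bounding the quadratic average below by the square of the linear average through Cauchy--Schwarz and comparing $\bar g(t):=\frac1t\int_{t_{0}}^{t}sw(s)\,ds$ with $D=\limsup sw(s)$ then collapses the right-hand side to $D-D^{2}$, yielding \eqref{e3.22}.

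The main obstacle is this second inequality. Unlike the first, it genuinely mixes $\liminf$ and $\limsup$, so one cannot split the $\liminf$ of the sum into separate limits; everything must be evaluated along one sequence $t_{n}$ chosen with $t_{n}w(t_{n})\to D$, while simultaneously controlling $\bar g(t_{n})$ and the quadratic average on that same sequence. The delicate steps are the Cauchy--Schwarz estimate $\frac1t\int(sw)^{2}\geq\bar g^{2}$ and the comparison $(\bar g-1)^{2}\geq(D-1)^{2}$ needed to force $2\bar g-\bar g^{2}\leq 2D-D^{2}$; the latter relies on the a priori control $\bar g\leq D$ together with a normalization keeping $D\leq 1$, which is exactly the kind of bound the reduced Riccati inequality supplies. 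By contrast, the first inequality is routine once the quadratic integral term is kept.
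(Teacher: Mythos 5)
Your proposal is correct and follows essentially the same route as the paper: the $\rho\equiv 1$ Riccati substitution, integration over $[t,\infty)$ with the quadratic term retained (and the pointwise bound $sw(s)>d-\epsilon$) for the first inequality, and multiplication by $s^{2}$ followed by integration by parts and the normalization $b(t)t^{1-\alpha}<1$ for the second. The only divergence is the final estimate of the second part, where the paper bounds the integrand pointwise, $2sw(s)-\tfrac{s^{\alpha-1}}{b(s)}(sw(s))^{2}\leq sw(s)\bigl(2-sw(s)\bigr)\leq (D+\epsilon)(2-D+\epsilon)$, instead of your Cauchy--Schwarz argument on the Ces\`aro averages; both lead to $D\leq -(B_{\alpha})_{*}+2D-D^{2}$ and hence to the stated conclusion.
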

\begin{proof}
	Let u(t) be a nonoscillatory solution of (3.2) such that $u(t)>0$, $u(\tau(t))>0$ and $u(\tau(\sigma(t)))>0$ for $t \geq T >t_0$, consider the case (I) of Lemma 3.2 holds, u(t) satisfies the inequality $D^{\alpha}\left(b(t)D^{\alpha}\left(a(t)D^{\alpha}u(t)\right)\right)\leq 0,~t \in [T,\infty)$. \\ 
	Define Riccati transformation
	\begin{align}\label{e3.23}
	w(t)=\dfrac{b(t)D^{\alpha}(a(t)D^{\alpha}u(t))}{a(t)D^{\alpha}u(t)}.
	\end{align}
	Thus $w(t)>0$, differentiating $\alpha$ times with respect to t, using (3.2), and $(A_2)$, we have
	\begin{align}\label{e3.24}
	D^{\alpha}w(t)\leq -\frac{kc(t)}{a(t)}\frac{u(\tau(\sigma(t)))}{D^{\alpha}u(t)}-\frac{1}{b(t)}w^{2}(t). 
	\end{align}By using (3.14), (3.12), and (3.5), we obtain the above inequality
	\begin{align}\label{e3.25}
	w^{'}(t)+t^{\alpha-1}A_{\alpha}(t)+t^{\alpha-1}\frac{1}{b(t)}w^{2}(t)\leq  0.
	\end{align}
	Given that $A_{\alpha}(t)>0$ and $w(t)>0$, which gives $w^{'}(t)\leq  0$, and
	\begin{align}\label{e3.26}
	-b(t)(w^{'}(t)t^{1-\alpha}/w^{2}(t))>1.
	\end{align}Which yields that
	\begin{align}\label{e3.27}
	\bigg(\frac{1}{w(t)}\bigg)^{'}>t^{\alpha-1}\frac{1}{b(t)}.
	\end{align}Integrating the above inequality from $t_1$ to t, and denote  $t_1^{\alpha-1}\frac{1}{b(t_1)}=M$, we have
	\begin{align}\label{e3.28}
	M(t-t_1)w(t)<1.
	\end{align}This implies that
	\begin{align}\label{e3.29}
	\lim_{t \to \infty}w(t)=0,~\lim_{t \to \infty}tw(t)=0.
	\end{align}From (3.6) and (3.27),
	\begin{align}\label{e3.30}
	0<d<1~ and ~0<D<1.
	\end{align}
	Even though if d=0 and D=0, there is nothing to prove. Now, to claim (3.20). Integrating (3.24) from t to $\infty$ and use (3.28), we get
	\begin{align}\label{e3.31}
	w(t)\geq \int\limits_{t}^{\infty}s^{\alpha-1}A_{\alpha}(s)ds+\int\limits_{t}^{\infty}s^{\alpha-1}\frac{1}{b(s)}w^{2}(s)ds.
	\end{align}Multiplying by t and taking liminf as $t \to \infty$, by (3.28), $d \geq (A_{\alpha})_*$. For any arbitrary $\epsilon>0$ and sufficiently small, there exists a $t_2 \geq t_1$ as
	\begin{align}\label{e3.32}
	d-\epsilon<tw(t)<d+\epsilon~ and ~t\int\limits_{t}^{\infty}s^{\alpha-1}A_{\alpha}(s)ds \geq (A_{\alpha})_* - \epsilon,~t\geq t_2.
	\end{align}
	Again from (3.31),
	\begin{align}\label{e3.33}
	tw(t)&\geq t\int\limits_{t}^{\infty}s^{\alpha-1}A_{\alpha}(s)ds+t\int\limits_{t}^{\infty}s^{\alpha-1}\frac{1}{b(s)}w^{2}(s)ds\nonumber \\
	&\geq t\int\limits_{t}^{\infty}s^{\alpha-1}A_{\alpha}(s)ds+t^{\alpha}\frac{1}{b(t)}\int\limits_{t}^{\infty}\frac{(sw(s))^2}{s^2}ds\nonumber \\&\geq t\int\limits_{t}^{\infty}s^{\alpha-1}A_{\alpha}(s)ds+t^{\alpha}\frac{1}{b(t)}(d-\epsilon)^2\int\limits_{t}^{\infty}\frac{1}{s^2}ds\nonumber\\
	&= t\int\limits_{t}^{\infty}s^{\alpha-1}A_{\alpha}(s)ds+t^{\alpha-1}\frac{1}{b(t)}(d-\epsilon)^2.
	\end{align}
	Therefore from (3.31) and (3.32), $d\geq (A_{\alpha})_* - \epsilon+(d-\epsilon)^2$. Then
	\begin{align*}
	d\geq (A_{\alpha})_*+t^{\alpha-1}\frac{1}{b(t)}d^2,
	\end{align*}since $\epsilon$ is arbitrarily small. Next to prove that (3.21). Multiply (3.24) by $s^2$, integrating from $t_1$ to t, using integration by parts, we obtain
	\begin{align*}
	\int\limits_{t_1}^{t}s^{\alpha+1}A_{\alpha}(s)ds&\leq - \int\limits_{t_1}^{t}s^2w^{'}(s)ds-\int\limits_{t_1}^{t}s^{\alpha+1}\frac{1}{b(s)}w^{2}(s)ds\\
	&\leq - t^2w(t)+t_1^2w(t_1)+2\int\limits_{t_1}^{t}sw(s)ds-\int\limits_{t_1}^{t}s^{\alpha+1}\frac{1}{b(s)}w^{2}(s)ds,
	\end{align*}which implies that
	\begin{align}\label{e3.34}
	t^2w(t) \leq t_1^2w(t_1)-\int\limits_{t_1}^{t}s^{\alpha+1}A_{\alpha}(s)ds+\int\limits_{t_1}^{t}\bigg(2sw(s)-s^{\alpha+1}\frac{1}{b(s)}w^{2}(s)\bigg)ds.
	\end{align}Thus, we have
	\begin{align}\label{e3.35}
	tw(t) &\leq \frac{t_1^2w(t_1)}{t}-\frac{1}{t}\int\limits_{t_1}^{t}s^{\alpha+1}A_{\alpha}(s)ds+\frac{1}{t}\int\limits_{t_1}^{t}s^{1-\alpha}b(s)ds,\nonumber \\
	&\leq \frac{t_1^2w(t_1)}{t}-\frac{1}{t}\int\limits_{t_1}^{t}s^{\alpha+1}A_{\alpha}(s)ds+\frac{1}{t}t^{1-\alpha}b(t)\int\limits_{t_1}^{t}ds.
	\end{align}
	By $A_4$, (3.34) imply that
	\begin{align}\label{e3.36}
	tw(t)\leq \frac{t_1^2w(t_1)}{t}-\frac{1}{t}\int\limits_{t_1}^{t}s^{\alpha+1}A_{\alpha}(s)ds+\frac{1}{t}(t-t_1).
	\end{align}Thus
	\begin{align*}
	\limsup\limits_{t\to \infty}tw(t)\leq 1- 	\liminf\limits_{t\to \infty} \frac{1}{t}\int\limits_{t_1}^{t}s^{\alpha+1}A_{\alpha}(s)ds.
	\end{align*}Hence from (3.5), (3.6), $D \leq 1-(B_{\alpha})_*$.
	For any arbitrary $\epsilon>0$ and sufficiently small, there exists a $t_2 \geq t_1$ such that
	\begin{align}\label{e3.37}
	D-\epsilon<tw(t)<D+\epsilon~ and ~\frac{1}{t}\int\limits_{t_0}^{t}s^{\alpha+1}A_{\alpha}(s)ds > (B_{\alpha})_* - \epsilon,~t\geq t_2.
	\end{align}Now, from (3.33) and (3.36) we get
	\begin{align}\label{e3.38}
	D \leq -(B_{\alpha})_*+ \epsilon(D+\epsilon)(2-D+\epsilon),~t\geq t_2,
	\end{align}since $\epsilon$ is  arbitrarily small, we have
	\begin{align*}
	(B_{\alpha})_* \leq D-D^2,
	\end{align*}which proves (3.21).
\end{proof}
\begin{lem}\label{l3.3}
	Suppose that the assumptions $(A_1) - (A_4)$ and (3.7) hold. Assume also that Case (I) of Lemma 3.2 holds. If
	\begin{align}\label{e3.39}
	\int\limits_{t_2}^{\infty}\eta^{\alpha-1}\frac{1}{a(\eta)}\bigg(\int\limits_{\eta}^{\infty}\int\limits_{\mu}^{\infty}s^{\alpha-1}c(s)dsd\mu\bigg)d\eta=\infty.
	\end{align}
	Then $\lim\limits_{t\to \infty}u(t)=0$.
\end{lem}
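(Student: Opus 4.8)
The plan is to argue by contradiction, working with the eventually positive solution in the decreasing alternative $u(t)>0$, $D^{\alpha}u(t)<0$, $D^{\alpha}(a(t)D^{\alpha}u(t))>0$ of Lemma 3.2, since this is the only regime compatible with the conclusion $u(t)\to 0$ (in the increasing alternative $u$ is bounded below by a positive constant). A positive decreasing $u$ has a finite limit $L:=\lim_{t\to\infty}u(t)\geq 0$, so it suffices to exclude $L>0$. Assuming $L>0$, monotonicity gives $u(\tau(\sigma(t)))\geq L$ eventually, whence by $(A_2)$ one has $f(u(\tau(\sigma(t))))\geq k\,u(\tau(\sigma(t)))\geq kL$. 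Feeding this into inequality (3.2) produces the model inequality $D^{\alpha}\big(b(t)D^{\alpha}(a(t)D^{\alpha}u(t))\big)\leq -kL\,c(t)$, which is the launching point for three successive integrations.

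The heart of the argument is to undo the three nested Katugampola derivatives by three $\alpha$-integrations, exploiting at each stage the sign information from the decreasing alternative. Writing $P:=b\,D^{\alpha}(aD^{\alpha}u)>0$, a first $\alpha$-integration from $t$ to $\infty$ (valid because $P>0$ forces the tail to converge and lets the boundary term be dropped) yields $P(t)\geq kL\int_t^{\infty}s^{\alpha-1}c(s)\,ds$, which already reproduces the innermost integral of the hypothesis. Next, since $Q:=aD^{\alpha}u$ satisfies $D^{\alpha}Q=P/b$ with $Q<0$ increasing, a second $\alpha$-integration gives $-Q(\eta)\geq \int_{\eta}^{\infty}\mu^{\alpha-1}b(\mu)^{-1}P(\mu)\,d\mu$. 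Here I would invoke the standing hypothesis $b(t)t^{1-\alpha}<1$ from $(A_4)$, which is exactly equivalent to $\mu^{\alpha-1}/b(\mu)>1$; this lets me discard the weight and obtain $-Q(\eta)\geq kL\int_{\eta}^{\infty}\int_{\mu}^{\infty}s^{\alpha-1}c(s)\,ds\,d\mu$, matching the middle integration with weight one.

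Finally, since $Q=aD^{\alpha}u$ and $D^{\alpha}u=t^{1-\alpha}u'$ by $(p_6)$, this last estimate reads $-u'(\eta)\geq kL\,\eta^{\alpha-1}a(\eta)^{-1}\int_{\eta}^{\infty}\int_{\mu}^{\infty}s^{\alpha-1}c(s)\,ds\,d\mu$. Integrating once more from $t_2$ to $t$ and using $u>0$ gives $u(t_2)\geq kL\int_{t_2}^{t}\eta^{\alpha-1}a(\eta)^{-1}\big(\int_{\eta}^{\infty}\int_{\mu}^{\infty}s^{\alpha-1}c(s)\,ds\,d\mu\big)d\eta$. Letting $t\to\infty$, the right-hand side diverges by the hypothesis (3.39), whereas the left-hand side is a fixed finite number; this contradiction forces $L=0$, i.e. $\lim_{t\to\infty}u(t)=0$.

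I expect the main obstacle to be the bookkeeping of the three interchanged improper integrations: at each stage I must justify that the relevant tail integral converges and that the boundary term carries the correct sign so it can be dropped. The one genuinely delicate point is the middle step, where an unwanted factor $1/b$ would otherwise break the match with the triple integral in (3.39); it is precisely the condition $b(t)t^{1-\alpha}<1$ baked into $(A_4)$ that rescues the estimate, so I would be careful to flag that this hypothesis is used in an essential way rather than merely for technical convenience.
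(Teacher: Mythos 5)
Your proof is correct and follows essentially the same route as the paper's: assume the limit $d'>0$ in the decreasing alternative, integrate the inequality (3.2) three times toward $\infty$, use $b(t)t^{1-\alpha}<1$ from $(A_4)$ to absorb the middle weight $\mu^{\alpha-1}/b(\mu)$, and derive a contradiction with the divergence of the triple integral. Incidentally, the paper's own proof likewise works in Case (II) of Lemma 3.2 despite the statement's reference to Case (I), exactly as you anticipated.
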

\begin{proof} 
	We consider Case (II) of Lemma 3.2, $D^{\alpha}u(t)<0, D^{\alpha}(a(t)D^{\alpha}u(t))>0$ for $t\geq t_1$. Since u(t) is positive and decreasing, there exists a $\lim\limits_{t\to \infty}u(t)=d^{'}\geq0$. Suppose not, $d^{'}>0$. Given that $u(\tau(\sigma(t))) \leq \tau(t) \leq t$, then $u(\tau(\sigma(t))) \geq u(t)>d^{'}$ for $t\geq t_2\geq t_1$ sufficiently large, u(t) is decreasing. Integrating (3.2) from $t$ to $\infty$ and using $u(\tau(\sigma(t))) \geq d^{'}$, we have
	\begin{align*}
	\int_{t}^{\infty}\bigg(b(s)D^{\alpha}\left(a(s)D^{\alpha}u(s)\right)\bigg)^{'}ds &\leq -\int_{t}^{\infty}ks^{\alpha-1}c(s)u(\tau(\sigma(s)))ds,\\
	&\leq -kd^{'}\int_{t}^{\infty}s^{\alpha-1}c(s)ds,
	\end{align*}then,
	\begin{align*}
	b(t)D^{\alpha}\left(a(t)D^{\alpha}u(t)\right) \geq kd^{'}\int_{t}^{\infty}s^{\alpha-1}c(s)ds.
	\end{align*}
	By $A_4$, we get
	\begin{align*}
	\left(a(t)D^{\alpha}u(t)\right)^{'} &\geq kd^{'}\frac{1}{b(t)t^{1-\alpha}}\int_{t}^{\infty}s^{\alpha-1}c(s)ds,\\
	&\geq kd^{'}\int_{t}^{\infty}s^{\alpha-1}c(s)ds.
	\end{align*}
	Again integrating the above inequality from t to $\infty$, we obtain
	\begin{align*}
	-a(t)D^{\alpha}u(t) &\geq kd^{'}\int_{t}^{\infty} \int_{\mu}^{\infty}s^{\alpha-1}c(s)dsd\mu,
	\end{align*}then,
	\begin{align*}
	-u^{'}(t) &\geq kd^{'}t^{\alpha-1}\frac{1}{a(t)} \int_{t}^{\infty} \int_{\mu}^{\infty}s^{\alpha-1}c(s)dsd\mu.
	\end{align*}
	Once again integrating this inequality from $t_2$ to $\infty$, we get
	\begin{align*}
	u(t_2) &\geq kd^{'} \int_{t_2}^{\infty}\bigg(\eta^{\alpha-1}\frac{1}{a(\eta)} \int_{\eta}^{\infty} \int_{\mu}^{\infty}s^{\alpha-1}c(s)dsd\mu \bigg)d\eta.
	\end{align*}
	which contracticts to (3.38). Thus $d^{'}=0$ and hence $\lim\limits_{t\to \infty}u(t)=0$.
\end{proof}
From Theorem 3.4, Nehari type oscillation criteria for (1.1).
\begin{thm}\label{t3.5}
	Suppose that the assumptions $(A_1) - (A_4)$, (3.7) and (3.38) hold.If
	\begin{align}\label{e3.39}
	\liminf\limits_{t\to \infty}\frac{1}{t}\int\limits_{t_0}^{t}\bigg(ks^{\alpha+1}\frac{c(s)}{a(s)}\frac{u(\tau(\sigma(s)))-T}{s}(u(\tau(\sigma(s))))^{\alpha}\bigg)ds>\frac{1}{2},
	\end{align}then u(t) is oscillatory or $\lim\limits_{t\to \infty}u(t)=0$.
\end{thm}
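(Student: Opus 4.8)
The plan is to prove the dichotomy by contradiction, assuming $u(t)$ is a nonoscillatory solution of $(1.1)$ for which $\lim_{t\to\infty}u(t)\neq 0$, and then extracting a contradiction from the Nehari-type hypothesis $(3.39)$. By Lemma 3.1 the component $u(t)$ is itself nonoscillatory, so without loss of generality I would take $u(t)>0$, $u(\tau(t))>0$ and $u(\tau(\sigma(t)))>0$ for all large $t$, the case $u(t)<0$ being symmetric. Lemma 3.2 then guarantees that exactly one of the two sign configurations $(I)$ or $(II)$ holds eventually, and the argument splits along this dichotomy.

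For Case $(II)$ there is nothing to rule out: since $(3.38)$ is assumed, Lemma 3.3 applies and yields $\lim_{t\to\infty}u(t)=0$, which is already one of the two alternatives in the conclusion. It therefore remains only to show that Case $(I)$ is incompatible with $(3.39)$, so that every nonoscillatory $u$ is forced into Case $(II)$ and hence into $u\to 0$.

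So suppose Case $(I)$ holds. Then all hypotheses of Theorem 3.4 are in force, and its second conclusion $(3.21)$ gives $(B_\alpha)_*\le D-D^2$, where $D=\limsup_{t\to\infty}tw(t)$ as in $(3.6)$. The elementary estimate $D-D^2=\tfrac14-(D-\tfrac12)^2\le\tfrac14$ then forces $(B_\alpha)_*\le\tfrac14$. On the other hand, reading the integrand of $(3.39)$ through the definition of $A_\alpha$ in $(3.5)$, one sees that it is exactly $2s^{\alpha+1}A_\alpha(s)$, so the left-hand side of $(3.39)$ is precisely $2(B_\alpha)_*$ and the hypothesis becomes $(B_\alpha)_*>\tfrac14$. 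This contradicts the bound just obtained, ruling out Case $(I)$ and completing the dichotomy.

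The step I expect to be the main obstacle is the bookkeeping that identifies the left-hand side of $(3.39)$ with $2(B_\alpha)_*$, so that the stated threshold $\tfrac12$ corresponds exactly to the classical Nehari constant $\tfrac14$ for $(B_\alpha)_*$; once this normalization is pinned down, the contradiction is immediate from $\max_D(D-D^2)=\tfrac14$. A secondary subtlety is that Theorem 3.4 is asserted only under Case $(I)$, so $(3.38)$ together with Lemma 3.3 is genuinely needed to dispose of Case $(II)$ by sending $u$ to zero; this is precisely why the conclusion is the dichotomy ``$u$ oscillatory or $u\to 0$'' rather than unconditional oscillation of $u$.
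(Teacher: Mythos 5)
Your argument is correct and is precisely the route the paper intends: the paper offers no proof of Theorem 3.5 beyond the remark that it follows ``From Theorem 3.4,'' and your combination of Lemma 3.3 (to dispose of Case (II) via hypothesis (3.38)) with the bound $(B_\alpha)_*\le D-D^2\le\tfrac14$ from Theorem 3.4 (to exclude Case (I)) is the natural completion. The only caveat is that your identification of the left-hand side of (3.39) with $2(B_\alpha)_*$ requires reading the occurrences of $u(\tau(\sigma(s)))$ in the printed integrand as the typographical error they evidently are for $\tau(\sigma(s))$; with that correction the threshold $\tfrac12$ is exactly $2\cdot\tfrac14$ and the contradiction follows.
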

\section{Examples}
In this section, we will present some examples to explain the effectiveness of our results.\\
\textbf{Example 1.}
Consider the $\alpha$-fractional delay differential system
\begin{align}\label{e4.1}
	D^{\frac{1}{2}}(u(t))&=\frac{1}{\sqrt{t}}g(v(\frac{t}{2}))\nonumber \\
	D^{\frac{1}{2}}(v(t))&=-\frac{1}{\sqrt{t}}h(w(t)),\\
	D^{\frac{1}{2}}(w(t))&=\frac{1}{\sqrt{t}}f(u(\frac{t}{2})),\nonumber ~~t \geq t_0, 
	\end{align} where $C_1=\cos(ln 2),~C_2=\sin (\ln 2),~ A_1=\cos (\ln 4),~ A_2=\sin (\ln 4)$.\\
Here $\alpha=\frac{1}{2}$, $p(t)=\frac{1}{a(t)}= \frac{1}{\sqrt{t}},~ q(t)=\frac{1}{b(t)}=\frac{1}{\sqrt{t}},~ r(t)=\frac{1}{\sqrt{t}}$, 
$f(u)=A_1\sqrt{(1-u^2)}-A_2u$, $g(v)=v$ and $h(w)=w$.\\
It is easy to see that $D^{\alpha}g(v)=\frac{1}{\sqrt{t}}(C_1+C_2) \geq l^{'}>0$, $D^{\alpha}h(w)=\frac{1}{\sqrt{t}}(C_1-C_2)\geq m^{'}>0$, $f(u)/u=A_1\sqrt{\frac{1}{u^2}-1}-A_2\geq 0.2579=k>0$, since $u^2 <1$, $\sigma(t)=\tau(t)=\frac{t}{2}$ and $D^{\alpha}\sigma(t)=\frac{\sqrt{t}}{2} \geq l>0$, $c(t)= \frac{C_1^2 - C_2^2}{4 \sqrt{t}}$, $A_{\alpha}(t)=\frac{0.2579}{16} \frac{\frac{t}{4}-T}{\sqrt{t}}$.\\
Now consider,
\begin{align*}
\int\limits_{t_2}^{\infty}c(s)(s-T)\tau(\sigma(s))ds=
\frac{C_1^2 - C_2^2}{4}\int\limits_{t_2}^{\infty} \frac{(s-T)}{\sqrt{s}}\frac{s}{4}ds \to \infty~ as~ t \to \infty.
\end{align*}
If we take $\rho(t)=16/k$ then $\rho^{'}(t)=0$. Consider
\begin{align*}
\limsup\limits_{t\to \infty}\int\limits_{t_1}^{t}&\bigg(s^{\alpha-1}\rho(s)A_{\alpha}(s)-\frac{1}{4}\frac{(\rho^{'}(s))^2}{\rho(s)}s^{1-\alpha}b(s)\bigg)ds\\
&=\limsup\limits_{t\to \infty}\int\limits_{t_1}^{t}\bigg(s^{-\frac{1}{2}}\frac{16}{k}\frac{k(\frac{s}{4}-T)}{16}\frac{1}{\sqrt{s}}\bigg)ds\\
&= \limsup\limits_{t\to \infty}\frac{1}{4}\int\limits_{t_1}^{t}\bigg(\frac{s-4T}{s}\bigg)ds
\to \infty~ as~ t \to \infty.
\end{align*}
All the conditions of Theorem 3.1 are satisfied. Hence every solution of $(4.1)$ is oscillatory. Thus $(u(t), v(t), w(t))=(\sin (ln t), c_1 \cos(ln t)-C_2 \sin (ln t), C_1 \sin (ln t)+c_2 \cos (ln t))$ is one such solution.\\
\textbf{Note:} The decreasing condition imposed on q(t) and r(t) is only a sufficient condition, however it is not a necessary one. The following example ensures the oscillatory behavior of the system (4.2) even though q(t) and r(t) is nondecreasing.\\
\textbf{Example 2.}
Consider the system of $\alpha$-fractional differential equations
	\begin{align}\label{e4.2}
	D^{\frac{1}{3}}(u(t))&=\frac{t^{\frac{2}{3}}}{1+\frac{3}{4}\cos^{\frac{5}{3}}(t)}g(v(t-2\pi)), \nonumber \\
	D^{\frac{1}{3}}(v(t))&=-t^{\frac{2}{3}}w(t),\\
	D^{\frac{1}{3}}(w(t))&=\frac{t^{\frac{2}{3}}}{1+\cos^{2}(t)}f(u(t-\frac{3\pi}{2})),\nonumber ~~t \geq t_0.
	\end{align}
Here $\alpha=\frac{1}{3}$, $p(t)=\frac{1}{a(t)}= \frac{t^{\frac{2}{3}}}{1+\frac{3}{4}\cos^{\frac{5}{3}}(t)},~ q(t)=\frac{1}{b(t)}=t^{\frac{2}{3}},~ r(t)=\frac{t^{\frac{2}{3}}}{1+\cos^{2}(t)}$, 
$f(u)=u(1+u^2)$, $g(v)=v(1+\frac{3}{4}v^{\frac{5}{3}})$ and $h(w)=w$.\\
It is easy to see that $D^{\alpha}g(v)=v^{\frac{1}{3}}+2v^2 \geq 1 = l^{'}>0$ such that $y^2>1,~y^{'}>\frac{1}{3}$, $D^{\alpha}h(w)\geq 1=m^{'}>0$, $f(u)/u=1+u^2\geq 1=k>0$, $\sigma(t)=t-2\pi$, $\tau(t)=t-\frac{3\pi}{2}$ and $D^{\alpha}\sigma(t)=t^{\frac{2}{3}} \geq l$ such that $t_1=l^{\frac{2}{3}}$ for $t \geq t_1$, $c(t)=l^2 \frac{t^{\frac{2}{3}}}{1+\cos^{2}(t)}$,\\ $$A_{\alpha}(t)=\frac{l^2}{2}\frac{1+\frac{3}{4}\cos^{\frac{5}{3}}(t)}{1+\cos^{2}(t)}\frac{t-\frac{3\pi}{2}-T}{t}\left(t-\frac{3\pi}{2}\right)^{\frac{1}{3}}.$$
Now consider,
\begin{align*}
\int\limits_{t_2}^{\infty}c(s)(s-T)\tau(\sigma(s))ds&=
\int\limits_{t_2}^{\infty}l^2 \frac{s^{\frac{2}{3}}}{1+\cos^{2}(s)}(s-T)(s-\frac{3\pi}{2})ds\\ & \geq \frac{l^2}{2} \int\limits_{t_2}^{\infty} s^{\frac{2}{3}}(s-T)(s-\frac{3\pi}{2})ds \to \infty~ as~ t \to \infty.
\end{align*}
If we take $\rho(t)=1$ then $\rho^{'}(t)=0$. Consider
\begin{align*}
\limsup\limits_{t\to \infty}\int\limits_{t_1}^{t}&\bigg(s^{\alpha-1}\rho(s)A_{\alpha}(s)-\frac{1}{4}\frac{(\rho^{'}(s))^2}{\rho(s)}s^{1-\alpha}b(s)\bigg)ds\\
&=\limsup\limits_{t\to \infty}\int\limits_{t_1}^{t}\bigg(s^{-\frac{2}{3}}l^2\frac{s^{\frac{2}{3}}}{1+\cos^{2}(s)}\frac{1}{2}\frac{1+\frac{3}{4}\cos^{\frac{5}{3}}(s)}{1+\cos^{2}(s)}\frac{s-\frac{3\pi}{2}-T}{s}(s-\frac{3\pi}{2})^{\frac{1}{3}}\bigg)ds\\
&\geq \limsup\limits_{t\to \infty}\frac{7l^2}{16}\int\limits_{t_1}^{t}\bigg((1-\frac{\frac{3\pi}{2}-T}{s})(s-\frac{3\pi}{2})^{\frac{1}{3}}\bigg)ds
\to \infty~ as~ t \to \infty.
\end{align*}
All the conditions of Theorem 3.1 are satisfied. Hence every solution of $(4.2)$ is oscillatory. Thus $(u(t), v(t), w(t))=(\sin t,\cos t, \sin t)$ is one such solution.\\
\textbf{Example 3.}
Consider the $\alpha$-fractional differential system
\begin{align}\label{e4.3}
	D^{\frac{1}{2}}(u(t))&=e^{2t}t^{\frac{1}{2}}g(v(t-1)),\nonumber \\
	D^{\frac{1}{2}}(v(t))&=-e^{-2t}t^{\frac{1}{2}}w(t),\\
	D^{\frac{1}{2}}(w(t))&=(et)^{\frac{1}{2}}f(u(t-\frac{1}{2})),\nonumber ~~t \geq t_0.
	\end{align}
Here $\alpha=\frac{1}{2}$, $p(t)=\frac{1}{a(t)}= e^{2t}t^{\frac{1}{2}},~ q(t)=\frac{1}{b(t)}=e^{-2t}t^{\frac{1}{2}},~ r(t)=(et)^{\frac{1}{2}}$, 
$f(u)=u$, $g(v)=v$ and $h(w)=w$.\\
It is easy to see that $D^{\alpha}g(v)=v^{\frac{1}{2}}=e^{-\frac{t}{2}}=l^{'}>0$, $D^{\alpha}h(w)=w^{\frac{1}{2}}=e^{\frac{t}{2}}=m^{'}>0$, $f(u)/u=1=k>0$, $\sigma(t)=t-1$, $\tau(t)=t-\frac{1}{2}$ and $D^{\alpha}\sigma(t)=t^{\frac{1}{2}} \geq l$ such that $t_1=l^{\frac{1}{2}}$ for $t \geq t_1$, $c(t)=l^2 (et)^{\frac{1}{2}}$, $$A_{\alpha}(t)=\frac{l^2}{2}e^{\frac{1}{2}}e^{2t}(t-\frac{1}{2}-T)(t-\frac{1}{2})^{\frac{1}{2}}.$$
Now,
\begin{align*}
\int\limits_{t_2}^{\infty}c(s)(s-T)\tau(\sigma(s))ds&=
\int\limits_{t_2}^{\infty}l^2 (es)^{\frac{1}{2}}(s-T)(s-\frac{1}{2})ds\\ & = l^2 e^{\frac{1}{2}} \int\limits_{t_2}^{\infty} s^{\frac{1}{2}}(s-T)(s-\frac{1}{2})ds \to \infty~ as~ t \to \infty.
\end{align*}
If we take $\rho(t)=\frac{1}{t^{\frac{7}{2}}e^{2t}}$ then $\rho^{'}(t)=-\frac{t^{\frac{7}{2}}}{2t^7e^{2t}}(4t+7)$. Consider
\begin{align*}
\limsup\limits_{t\to \infty}\int\limits_{t_1}^{t}&\bigg(s^{\alpha-1}\rho(s)A_{\alpha}(s)-\frac{1}{4}\frac{(\rho^{'}(s))^2}{\rho(s)}s^{1-\alpha}b(s)\bigg)ds\\
&=\limsup\limits_{t\to \infty}\int\limits_{t_1}^{t}\bigg(s^{-\frac{1}{2}}\frac{l^2e^{\frac{1}{2}}}{s^{\frac{7}{2}}e^{2s}}\frac{1}{2}e^{2s}(s-\frac{1}{2}-T)(s-\frac{1}{2})^{\frac{1}{2}}-\frac{(4s+7)^2}{4s^9e^{4s}}e^{4s}s^{\frac{7}{2}}\bigg)ds
\end{align*}
\begin{align*}
&\leq \limsup\limits_{t\to \infty}\int\limits_{t_1}^{t}\bigg(\frac{l^2 e^{\frac{1}{2}}}{2s^{\frac{7}{2}}}s-\frac{s^2}{4s^{\frac{11}{2}}}\bigg)ds\\
&\leq \limsup\limits_{t\to \infty}\int\limits_{t_1}^{t}\bigg(\frac{l^2 e^{\frac{1}{2}}}{2s^{\frac{5}{2}}}-\frac{1}{4s^{\frac{7}{2}}}\bigg)ds
< \infty.
\end{align*}
The condition (3.8) of Theorem 3.1 is not satisfied, in view of the fact that $(A_4)$ fails to hold, and hence the system (4.3) is not oscillatory. In fact, $(u(t), v(t), w(t))=(e^t,e^{-t},e^t)$ is such a solution of (4.3), which is a nonoscillatory solution.

\section*{Conclusion}

\hspace{0.2in} In this study, the authors have derived some new oscillation results for some class of nonlinear three-dimensional $\alpha$-fractional differential system by using Riccati transformation and inequality technique. This work extends some of the known results in the classical literature to $\alpha$- fractional systems.

{\small }			

\begin{thebibliography}{30}
		
		\bibitem  {1} S. Abbas, M. Benchohra and G. M. N'Guerekata, \emph{Topics in fractional differential equations}, Springer, New York, 2012.
		
		\bibitem  {2} T. Abdeljawad, On conformable fractional calculus, \emph{J. Comp. Appl. Math.}, {\bf 279} (2015) 57--66.
		
		\bibitem  {3} E. Akin, Z. Dosla and B. Lawrence, Oscillatory properties for three-dimensional dynamic systems, \emph{Nonlinear Analysis}, {\bf 69} (2008) 483--494.
		
		\bibitem  {4} E. Akin, Z. Dosla and B. Lawrence, Almost oscillatory three-dimensional dynamical systems, \emph{Advances in Difference Equations}, {\bf 2012}, 2012:46.
		
		\bibitem  {5} A. Atangana, D. Baleanu and A. Alsaedi, New properties of conformable derivative, \emph{Open Math.}, {\bf 13} (2015) 889--898.
		
		\bibitem  {6} D. R. Anderson and D. J. Ulnessn, Properties of the Katugampola fractional derivative with potential application in quantum mechanics, \emph{J. Math. Phy.}, {\bf 56 (6)}, 063502 (2015).
		
		\bibitem {7} D. Bainov and D. Mishev, \emph{Oscillatory theory of operator-differential equations}, World Scientific, Singapore, 1987.
		
		\bibitem {8} V. Daftardar-Gejji, \emph{Fractional calculus theory and applications}, Narosa Publishing House Pvt. Ltd., 2014.
		
		\bibitem {9} K. Diethelm, \emph{The analysis of fractional differential equations}, Springer, Berlin, 2010.
		
		\bibitem {10} M. Dosoudilova, A. Lomtatidze and J. Sremr, Oscillatory properties of solutions to two-dimensional Emden-Fowler type equations, \emph{International workshop qualitde}, Georgia, (2014) 18--20.
		
		\bibitem {11} L. H. Erbe, Q. K. Kong and B. G. Zhang, \emph{Oscillation theory for functional differential equations}, Marcel Dekker, New York, 1995.
		
		\bibitem {12} R. Hilfer, \emph{Applications of fractional calculus in physics}, World Scientific Pub. co., 2000.
		
		\bibitem {13} H. F. Huo and W. T. Li, Oscillation criteria for certain two-dimensional differential systems, \emph{International Journal of Applied Mathematics,} {\bf 6 (3)} (2001) 253--261.
		
		\bibitem {14} U. N. Katugampola, A new fractional derivative with classical properties, e-print arXiv:1410.6535, (2014).
		
		\bibitem {15} U. N. Katugampola, A new approach to generalized fractional derivatives, \emph{Bull. Math. Anal. Appl.}, {\bf 6(4)} (2014) 1--15.
		
		\bibitem {16} R. Khalil, M. Al Horani, A. Yousef and M. Sababheh, A new definition of fractional derivatives, \emph{J. Compu. Appl. Math.}, {\bf 264} (2014) 65--70.
		
		\bibitem {17} I. T. Kiguradze and T. A. Chaturia, \emph{Asymptotic properties of solutions of nonautonomous ordinary differential equations}, Kluwer Acad. Publ., Dordrecht, 1993.
		
		\bibitem {18} A. A. Kilbas, H. M. Srivastava and J. J. Trujillo, \emph{Theory and applications of fractional differential equations}, Elsevier science, B.V. Amsterdam, 2006.
		
		\bibitem {19} I-G.E. Kordonis and Ch. G. Philos, On the oscillation of nonlinear two-dimensional differential systems, \emph{Proc. Amer. Math. Soc.,} {\bf 126} (1998) 1661--1667.
		
		\bibitem {20} M. J. Lazo and D. F. Torres, Variational calculus with conformable fractional derivatives, \emph{IEEE/CAA Journal of Automatica Sinica}, {\bf 4 (2)} (2017) 340--352.
		
		\bibitem {21} W. T. Li and S. S. Cheng, Limiting behaviours of non-oscillatory solutions of a pair of coupled nonlinear differential equations, \emph{Proc. of the Edinburgh Math. Soc.}, {\bf 43} (2000) 457--473.
		
		\bibitem {22} A. Lomtatidze and J. Sremr, On oscillation and nonoscillation of two-dimensional linear differential systems, \emph{Georgian Math. J.,} {\bf 20} (2013) 573--600.
		
		\bibitem {23} C. Milici and G. Draganescu, \emph{Introduction to fractional calculus}, Lambert Academic Publishing, 2016.
		
		\bibitem {24} S. Padhi and S. Pati, \emph{Theory of third order differential equations}, Electronic Journal of Differential Equations, Springer, New Delhi, 2014.
		
		\bibitem {25} V. Sadhasivam, J. Kavitha and M. Deepa, Existence of solutions of three-dimensional fractional differential systems, \emph{Applied Mathematics}, {\bf 8} (2017) 193--208.
		
		\bibitem {26} V. Sadhasivam, M. Deepa and N. Nagajothi, On the oscillation of nonlinear fractional differential systems, \emph{International Journal of Advanced Research in Science, Engineering and Technology}, {\bf 4 (11)} (2017) 4861--4867.
		
		\bibitem {27} S. H. Saker, \emph{Oscillation theory of delay differential and difference equations}, VDM  Verlag Dr. Muller Aktiengesellschaft and Co, USA, 2010.
		
		\bibitem {28} S. G. Samko, A. A. Kilbas and O. I. Mirchev, \emph{Fractional integrals and derivatives, Theory and applications}, Gordon and Breach Science Publishers, Singapore, 1993.
		
		\bibitem {29} E. Spanikova and Zilina, Oscillatory properties of solutions of three-dimensional differential systems of neutral type, \emph{Czechoslovak Mathematical Journal}, {\bf 50 (125)} (2000) 879--887.
		
		\bibitem {30} E. Thandapani and B. Selvaraj, Oscillatory behavior of solutions of three-dimensional delay difference systems, \emph{Rodovi Matematicki}, {\bf 13} (2004) 39--52.
		
		\bibitem {31} W. C. Troy, Oscillations in a third order differential equation modeling a nuclear reactor, \emph{SIAM J. Appl. Math.}, {\bf 32 (1)} (1977) 146--153.
		
		\bibitem {32} S. D. Vreeke and G. M. Sandquist, Phase plane analysis of reactor kinetics, \emph{Nuclear Sci. Engner.}, {\bf 42} (1970) 259--305.
		
		\bibitem {33} Y. Zhou, \emph{Basic theory of fractional differential equations}, World scientific, Singapore, 2014. 
		
\end{thebibliography}
\end{document}